\newtheorem{thm}{Theorem}[section]
\newtheorem{prop}[thm]{Proposition}
\newtheorem{lem}[thm]{Lemma}
\newtheorem*{claim}{Claim}
\newtheorem*{thm*}{Theorem}
\newtheorem*{question*}{Question}
\newtheorem*{assumption*}{Assumption}
\newcommand{\F}{\mathbb{F}}
\newcommand{\fl}{\mathfrak{l}}
\newcommand{\Gal}{\mathrm{Gal}}
\newcommand{\Aut}{\mathrm{Aut}}
\newcommand{\sep}{\mathrm{sep}}
\newcommand{\alg}{\mathrm{alg}}
\newcommand{\GL}{\mathrm{GL}}
\newcommand{\SL}{\mathrm{SL}}
\newenvironment{usethmcounterof}[1]{%
  \thm}{\endthm\addtocounter{thm}{-1}}
\title{Exceptional cases of adelic surjectivity for Drinfeld modules of rank 2}
\author{Chien-Hua Chen}
\begin{document}
\begin{abstract}
In this paper, we study the surjectivity of adelic Galois representation associated to Drinfeld $\F_q[T]$-modules over $\F_q(T)$ of rank $2$ in the cases when $q$ is even or $q=3$.

\end{abstract}

\maketitle

\section{Introduction}
In \cite{Ser72}, Serre proved the adelic openness for elliptic curves $E$ over $\mathbb{Q}$ without complex multiplication. In other words, the image of the Galois representation
$$\rho_E:\Gal(\bar{\mathbb{Q}}/\mathbb{Q})\rightarrow \varprojlim_{n}{\rm{Aut}}E[n]\cong \GL_2(\hat{\mathbb{Z}})$$
is an open subgroup of $\GL_2(\hat{\mathbb{Z}})$. Hence $\rho_{E}(\Gal(\bar{\mathbb{Q}}/\mathbb{Q}))$ has finite index in $\GL_2(\hat{\mathbb{Z}})$.

He then studied the maximal Galois image for elliptic curves over $\mathbb{Q}$ and proved that it is impossible to attain adelic surjectivity for elliptic curves over $\mathbb{Q}$ (see \cite{Ser72}, Proposition 22). The reason is that the $2$-division field $\mathbb{Q}(E[2])$ has a subextension $M$ of degree $2$ over $\mathbb{Q}$, which is also contained in some $m$-division field $\mathbb{Q}(E[m])$. Hence the $2$-division field has nontrivial intersection with the $m$-division field. 

For the function field analogue of the maximal Galois image problem, Pink and R\"utsche\cite{PR09} proved the open image theorem for Drinfeld modules of arbitrary rank with generic characteristic. We let $A=\F_q[T]$ and $F=\F_q(T)$. It is natural then to ask whether there is a Drinfeld $A$-module $\phi$ over $F$ of rank $r$ with generic characteristic whose adelic Galois representation
$${\rho}_{\phi}:G_K\longrightarrow \varprojlim_{\mathfrak{a}}{\rm{Aut}}(\phi[\mathfrak{a}])\cong {\rm{GL_r}}(\widehat{A})$$
is surjective.  Zywina \cite{Zy11} proved the adelic surjectivity for a Drinfeld $A$-module over $F$ of rank $2$ under the assumption that $q\geqslant 5$ is an odd prime power. The author \cite{Chen20} proved the adelic surjectivity for a Drinfeld $A$-module over $F$ of rank $3$ under the condition that $q=p^e$ is a prime power with $p\geqslant 5 \text{ and } q\equiv 1\mod 3$.

One then curious about what happens in the even characteristic case and in the case $q=3$. Does the entanglement between two division fields that is always present for elliptic curves over $\mathbb{Q}$ is also present in Drinfeld modules setting? In this paper, we prove that when $q=2$, the adelic surjectivity for Drinfeld modules over $F$ of rank $2$ is never achieved.

\begin{usethmcounterof}{q=2}
Let $A=\mathbb{F}_2[T]$ and $F=\mathbb{F}_2(T)$. Let $\phi_T=T+g_1\tau+g_2\tau^2$ be a Drinfeld $A$-module over $F$ of rank $2$ where $g_1\in F$ and $g_2\in F^*$. The adelic Galois representation
$${\rho}_{\phi}:G_F\longrightarrow \varprojlim_{\mathfrak{a}}{\rm{Aut}}(\phi[\mathfrak{a}])\cong {\rm{GL_2}}(\widehat{A})$$
is not surjective.

\end{usethmcounterof}
In the case $q=3$, we prove that a certain type of Drinfeld $A$-module over $F$ of rank $2$ will never produce adelic surjectivity. But we do have Drinfeld $A$-module over $F$ of rank $2$ with surjective adelic Galois representation.

\begin{usethmcounterof}{5.3}
Let $A=\F_3[T]$ and $F=\F_3(T)$. Let $\phi$ be a rank $2$ Drinfeld $A$-module over $F$ defined by 
$$\phi_T=T+g_1\tau+g_2\tau^2,\ \text{where $g_1\in F$ and $g_2\in F^*$ with $-g_2$ is a non-square element}.$$
Then the adelic Galois representation 
$${\rho}_{\phi}:G_F\longrightarrow \varprojlim_{\mathfrak{a}}{\rm{Aut}}(\phi[\mathfrak{a}])\cong {\rm{GL_2}}(\widehat{A})$$
is not surjective.

\end{usethmcounterof}

\begin{usethmcounterof}{q=3-2}
Let $A=\F_3[T]$ and $F=\F_3(T)$. Let $\varphi$ be a rank $2$ Drinfeld $A$-module over $F$ defined by 
$$\varphi_T=T+\tau-T^2\tau^2.$$
Then the adelic Galois representation 
$${\rho}_{\varphi}:G_F\longrightarrow \varprojlim_{\mathfrak{a}}{\rm{Aut}}(\varphi[\mathfrak{a}])\cong {\rm{GL_2}}(\widehat{A})$$
is surjective.
\end{usethmcounterof}

Finally, for the case $q=2^e\geqslant 4$, we prove that Zywina's choice of Drinfeld module defined by $\varphi_T=T+\tau-T^{q-1}\tau^2$ can still have adelic surjectivity.

\begin{usethmcounterof}{6.2}

Let $A=\F_q[T]$ and $F=\F_q(T)$ with $q=2^e\geqslant 4$. Let $\varphi$ be a rank $2$ Drinfeld $A$-module over $F$ defined by 
$$\varphi_T=T+\tau-T^{q-1}\tau^2.$$
Then the adelic Galois representation 
$${\rho}_{\varphi}:G_F\longrightarrow \varprojlim_{\mathfrak{a}}{\rm{Aut}}(\varphi[\mathfrak{a}])\cong {\rm{GL_2}}(\widehat{A})$$
is surjective.

\end{usethmcounterof}

\section{Preliminaries}
Let $\mathbb{F}_q$ be a finite field with $q=p^e$, and $A=\mathbb{F}_q[T]$ (we don't restrict $q$ until we are into a specific section). An \textbf{$A$-field} K is a field equipped with a homomorphism $\gamma: A\rightarrow K$ of $\mathbb{F}_q$-algebras. The kernel $\text{ker}(\gamma)$ is called the \textbf{A-characteristic} of $K$, and we say $K$ has \text{generic characteristic} if $\text{ker}(\gamma)=0$.  Let $K\{\tau\}$ be the ring of skew polynomials along with the commutative rule $\tau\cdot c=c^q\cdot \tau$. A \textbf{Drinfeld $A$-module over $K$ of rank $r\geqslant 1$} is a ring homomorphism
\begin{align*}
\phi&: A\longrightarrow K\{\tau\} \\
      &\ \ \ a \longmapsto \phi_a=\gamma(a)+ \sum^{r\cdot {\rm{deg}}(a)}_{i=1}g_i(a)\tau^i.
\end{align*}
It is uniquely determined by $\phi_T=\gamma(T)+\sum^{r}_{i=1}g_i(T)\tau^i$, where $g_r(T)\neq0$. Two Drinfeld modules $\phi$ and $\psi$ are isomorphic over $K$ if there is an element $c\in K^*$ such that $c\cdot \phi_a=\psi_a\cdot c$ for all elements $a\in A$. On the other hand, let $K\langle x\rangle=\{\sum^{n}_{i=0}a_ix^{q^i}\mid a_i\in K\}$ denote the polynomial ring of $q$-polynomials where the multiplication is defined to be composition of polynomials. It's easy to see that $K\{\tau\}$ is isomorphic to $K\langle x\rangle$ under the map
$$
\begin{array}{ccc}
K\{\tau\}&\longrightarrow& K\langle x\rangle \\
\sum^{n}_{i=0}a_i\tau^i &\mapsto& \sum^{n}_{i=0}a_ix^{q^i}
\end{array}
.$$
Thus we may also view $\phi_T$ as a $q$-polynomial $\phi_T(x)=\gamma(T)x+\sum^{r}_{i=1}g_i(T)x^{q^i}$.
The Drinfeld module $\phi$ over $K$ gives $K$ an $A$-module structure, where $a\in A$ acts on $K$ via $\phi_{a}$. We use the notation $^{\phi}K$ to emphasize the action of $A$ on $K$.
The {\textbf{$a$-torsion}} is 
$$\phi[a]=\left\{ \alpha\in K^\alg \mid \phi_a(\alpha)= \gamma(a)\alpha+ \sum^{r\cdot {\rm{deg}}(a)}_{i=1}g_i(a)\alpha^{q^i}=0 \right\}.$$ 
Note that $\phi[a]$ is an $A$-module, where $b\in A$ acts on $\alpha\in \phi[a]$ by 
$b\cdot \alpha=\phi_b(\alpha)$.

\begin{prop}\label{prop0.2}
Let $\phi$ be a Drinfeld module over $K$ of rank $r$ and $0\neq a\in A$. If the $A$-characteristic of $K$ does not divide $a$, then 
there is an isomorphism of $A$-modules $\phi[a]\simeq (A/aA)^r$.
\end{prop}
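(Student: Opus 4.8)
The plan is to combine a cardinality count, which comes from the separability of $\phi_a$, with the structure theorem for finitely generated modules over the principal ideal domain $A=\F_q[T]$, after first reducing to the case where $a$ is a prime power.

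First I would pin down $\#\phi[a]$. Writing $\phi_a$ as a $q$-polynomial $\phi_a(x)=\gamma(a)x+\dots+g_{r\deg(a)}(a)x^{q^{r\deg(a)}}$, the fact that $\phi$ is a ring homomorphism with $g_r(T)\neq 0$ forces the top coefficient $g_{r\deg(a)}(a)$ to be a nonzero product of the leading coefficient of $a$ with a power of $g_r(T)$, so $\deg_x\phi_a=q^{r\deg(a)}$. The hypothesis that the $A$-characteristic $\ker(\gamma)$ does not divide $a$ is precisely the statement that $\gamma(a)\neq 0$; hence $\phi_a'(x)=\gamma(a)\neq 0$, so $\phi_a$ is separable and its root set $\phi[a]$ has exactly $q^{r\deg(a)}$ elements. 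Since $\#(A/aA)=q^{\deg(a)}$, this already gives $\#\phi[a]=\#\bigl((A/aA)^r\bigr)$.

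Next I would note that $\phi[a]$ is finite and is annihilated by $a$ for the $A$-action $b\cdot\alpha=\phi_b(\alpha)$, so it is a finite, in particular finitely generated, torsion $A$-module. If $b,c\in A$ are coprime, say $bx+cy=1$, then for $\alpha\in\phi[bc]$ one has $\alpha=\phi_{bx}(\alpha)+\phi_{cy}(\alpha)$ with $\phi_{bx}(\alpha)\in\phi[c]$ and $\phi_{cy}(\alpha)\in\phi[b]$, while $\phi[b]\cap\phi[c]=\phi[1]=0$; thus $\phi[bc]=\phi[b]\oplus\phi[c]$. Factoring $a=\prod_i\fl_i^{k_i}$ and iterating, together with the Chinese remainder theorem for $A/aA$, reduces the claim to showing $\phi[\fl^k]\cong(A/\fl^k A)^r$ for every monic prime $\fl$ not dividing the $A$-characteristic and every $k\geq 1$ (each $\fl_i$ inherits nondivisibility by the characteristic from $a$).

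For $a=\fl^k$: the count above applied to $\fl$ shows $\phi[\fl]$ is an $A/\fl$-vector space of dimension $r$. By the structure theorem, $\phi[\fl^k]\cong\bigoplus_{i=1}^m A/\fl^{e_i}$ with $1\leq e_i\leq k$; its $\fl$-torsion submodule is $\bigoplus_{i=1}^m A/\fl$, which must coincide with $\phi[\fl]$, forcing $m=r$. Comparing cardinalities, $q^{rk\deg(\fl)}=q^{(e_1+\dots+e_r)\deg(\fl)}$, so $e_1+\dots+e_r=rk$ with $r$ summands each at most $k$, whence $e_i=k$ for all $i$ and $\phi[\fl^k]\cong(A/\fl^k A)^r$. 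Reassembling over the primes dividing $a$ then yields $\phi[a]\cong(A/aA)^r$ as $A$-modules. The only step requiring real care is the cardinality count in the second paragraph — verifying that $\deg_x\phi_a=q^{r\deg(a)}$ and that the hypothesis is genuinely equivalent to $\gamma(a)\neq 0$ — since everything afterward is a routine application of the structure theorem.
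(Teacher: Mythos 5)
The paper states this proposition as a known structural fact and supplies no proof of its own (it is the standard torsion-structure result from the basic theory of Drinfeld modules), so there is no in-paper argument to compare against; judged on its own, your proof is correct and complete. The three ingredients are all in order: the cardinality count is justified properly, since writing $a=\sum_j\alpha_jT^j$ with $\alpha_j\in\F_q$ gives $\phi_a=\sum_j\alpha_j\phi_T^{\,j}$, whose top $\tau$-coefficient is $\alpha_{\deg a}\,g_r^{1+q^r+\cdots+q^{r(\deg a-1)}}\neq 0$, and the derivative of $\phi_a(x)$ is the constant $\gamma(a)\neq 0$, which is indeed equivalent to the hypothesis that $\ker(\gamma)$ does not divide $a$; the splitting $\phi[bc]=\phi[b]\oplus\phi[c]$ for coprime $b,c$ via a B\'ezout identity is valid (and correctly notes that every prime factor of $a$ stays outside the characteristic ideal, since $\ker(\gamma)$ is prime); and the prime-power case is settled correctly by combining the structure theorem over the PID $A$ with the fact that the $\fl$-torsion of $\phi[\fl^k]$ is exactly $\phi[\fl]$, which pins down the number of cyclic summands as $r$ and then the cardinality count forces all exponents to equal $k$. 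Reassembling over the primes dividing $a$ with the Chinese remainder theorem finishes the argument, so your write-up would serve as a complete proof of the proposition as stated.
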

 
If the characteristic of $K$ does not divide $a$, then $\phi_a(x)=\gamma(a)x+\sum^{r\cdot {\rm{deg}}(a)}_{i=1}g_i(a)x^{q^i}$ is a separable polynomial. Therefore $G_K=\Gal(K^\sep/K)$ acts on $\phi[a]$ and this action commutes with the action of $A$. From this action we get a 
representation $G_K\to \Aut_A(\phi[a])\cong \GL_r(A/aA)$. When $a=\fl$ is irreducible, we denoted this representation 
$$\bar{\rho}_{\phi, \fl}:G_F\rightarrow \GL_r(\mathbb{F}_\mathfrak{l}).$$ 
Obviously $\bar{\rho}_{\phi,\mathfrak{l}}$ factor through $\Gal(F(\phi[\mathfrak{l}])/F)$, where the field $F(\phi[\mathfrak{l}])$ is defined to be the splitting field of $\phi_\mathfrak{l}(x)$ over $F$.

Taking inverse limit with respect to $\fl^i$, we get the \textbf{$\fl$-adic Galois representation}
$${\rho}_{\phi,\fl}: G_K \longrightarrow \varprojlim_{i}{\Aut}(\phi[\fl^i])\cong \GL_r(A_\fl),$$
where $A_\fl$ denotes the completion of $A$ at $\fl$. 
Combining all representations together, we get the \textbf{adelic Galois representation}
$${\rho}_{\phi}:G_K\longrightarrow \varprojlim_{\mathfrak{a}}{\rm{Aut}}(\phi[\mathfrak{a}])\cong {\rm{GL_r}}(\widehat{A}),$$
where $\hat{A}$ is defined as $\varprojlim_{\mathfrak{a}}{A/\mathfrak{a}}$.

Throughout this paper, we set $K$ to be $F=\mathbb{F}_q(T)$ equipped with the natural injection $\gamma: A\rightarrow F$. We will focus on Dinfeld $A$-modules $\phi$ over $F$ of rank $2$, which are usually defined by $\phi_T=T+g_1\tau+g_2\tau^2$ with $g_1\in F$ and $g_2\in F^*$.

\section{Abelian extensions over rational function field}
In the class field theory, Kronecker-Weber theorem says an abelian extension over $\mathbb{Q}$ must be contained in some cyclotomic extension of $\mathbb{Q}$.  In his work on function field analogue of class field theory, Hayes \cite{Hay74} gave a complete description on the maximal abelian extension of $\mathbb{F}_q(T)$. This section is to summarize Hayes' results. 

\begin{enumerate}

\item Maximal constant extension $E$:\\
The maximal constant extension $E$ of $\mathbb{F}_q(T)$ is the union of all constant extensions of $\\\mathbb{F}_q(T)$.

\item Carlitz extension $K_T$:\\
Let $C_T=T+\tau$ be the rank $1$ Drinfeld $A$-module over $F$, which is the so-called Carlitz module. Let $\mathfrak{a}$ be an ideal of $A$, we denote by $F(C[\mathfrak{a}])$ the splitting field of $C_{\mathfrak{a}}(x)$ over $F$. The Carlitz extension $K_T$ is defined to be the union of $F(C[\mathfrak{a}])$ among all ideals in $A$.

\item Wildly ramified at infinity $L_\infty$:\\
Let $A'$ be the polynomial ring $\mathbb{F}_q[\frac{1}{T}]$, consider the Drinfeld $A'$-module $C'_{\frac{1}{T}}=\frac{1}{T}+\tau$ over $F$. Set $F_\nu=F(C'[(\frac{1}{T^{-\nu-1}})])$, we have $[F_\nu: F]=q^\nu(q-1)$. The action of $b\in A'$ on $F_\nu$ is defined via $C'_b$. In particular, we can identify$\beta\in \mathbb{F}_q^*$ with automorphisms on $F_\nu$ that maps $\lambda\in F_\nu$ to $\beta\lambda$. Let $L_\nu$ be the fixed field of $\mathbb{F}_q^*$ in $F_\nu$, we have $[L_\nu:F]=q^\nu$ and $[F_\nu:L_\nu]=q-1$. Therefore, the place $(\frac{1}{T})$ is totally wildly ramified in $L_\nu/F$. We know from the definition of $L_\nu$ that $L_\nu\subset L_{\nu+1}$ and  $L_\infty$ is defined by the union $L_\infty=\cup_{\nu=1}^{\infty}L_\nu$.

\end{enumerate}

\begin{thm}\label{cft}
The three extensions $E/F$, $K_T/F$, and $L_\infty/F$ are linearly disjoint with each other, and their composite $E\cdot K_T\cdot L_\infty$ is the maximal abelian extension of $F$.
\end{thm}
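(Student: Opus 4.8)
The plan is to derive Theorem \ref{cft} from global class field theory for $F=\F_q(T)$, combined with the explicit reciprocity laws supplied by the Carlitz module (at the finite primes) and by its analogue at $\infty$ — which together constitute Hayes' construction.

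First I would set up the idelic picture. Let $\mathbb{A}_F$ be the adele ring of $F$ and $C_F=\mathbb{A}_F^\times/F^\times$ the idele class group; the Artin reciprocity map induces a topological isomorphism from the profinite completion $\widehat{C_F}$ onto $\Gal(F^{\mathrm{ab}}/F)$, under which, for each place $v$, the local unit group $\mathcal{O}_v^\times\subset F_v^\times$ maps onto the inertia subgroup at $v$. Next I would compute $C_F$ explicitly. By the product formula the degree homomorphism $\deg\colon C_F\to\mathbb{Z}$ is well defined, and it is surjective because $\infty$ has degree $1$; since $F$ has genus $0$ (so $\mathrm{Pic}^0$ is trivial) its kernel is $C_F^0\cong\big(\prod_v\mathcal{O}_v^\times\big)/\F_q^\times$ with $\F_q^\times$ embedded diagonally. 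Writing $\mathcal{O}_\infty^\times=\F_q^\times\times U_\infty^{(1)}$, where $U_\infty^{(1)}=1+\mathfrak{m}_\infty$ is the group of principal units at $\infty$, and using the $\F_q^\times$-factor at $\infty$ to absorb the diagonal copy, one obtains a canonical decomposition
$$\widehat{C_F}\;\cong\;\widehat{\mathbb{Z}}\;\times\;\Big(\textstyle\prod_{\fp}\mathcal{O}_\fp^\times\Big)\;\times\;U_\infty^{(1)},$$
where $\fp$ runs over the finite places, the first factor is spanned by a degree-$1$ idele, and the middle factor is $\widehat{A}^\times=\varprojlim_{\mathfrak{a}}(A/\mathfrak{a})^\times$.

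The heart of the argument is then to match these three factors, through the reciprocity isomorphism, with the Galois groups of $E$, $K_T$, $L_\infty$. For $\widehat{\mathbb{Z}}$: it is canonically $\Gal(\overline{\F_q}/\F_q)=\Gal(E/F)$, and it is precisely the part of $\widehat{C_F}$ on which every $\mathcal{O}_v^\times$ acts trivially, i.e.\ the everywhere-unramified part, which for $\F_q(T)$ (genus $0$) is exactly the constant extension $E$. For $\widehat{A}^\times$: Hayes' explicit class field theory shows that the Carlitz torsion field $F(C[\mathfrak{a}])$ is abelian over $F$ with $\Gal(F(C[\mathfrak{a}])/F)\cong(A/\mathfrak{a})^\times$, is unramified outside the primes dividing $\mathfrak{a}$ and $\infty$, is tamely ramified at $\infty$ with inertia $\F_q^\times$, has constant field exactly $\F_q$, and has Artin map equal to the natural projection on finite ideles; passing to the limit over $\mathfrak{a}$ identifies $\Gal(K_T/F)$ with $\widehat{A}^\times$ compatibly with the decomposition above (note $U_\infty^{(1)}$ is pro-$p$ while the $\infty$-inertia of $K_T$ has order prime to $p$, so these do not interfere). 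For $U_\infty^{(1)}$: applying the same construction to the Carlitz $A'$-module $C'$ for $A'=\F_q[1/T]$ at $\infty$ gives $\Gal(F_\nu/F)\cong(A'/\mathfrak{m}_\infty^{\nu+1})^\times$, whence the fixed field $L_\nu$ of $\F_q^\times$ has $\Gal(L_\nu/F)\cong U_\infty^{(1)}/U_\infty^{(\nu+1)}$ and $\Gal(L_\infty/F)\cong U_\infty^{(1)}$; local class field theory at $\infty$ matches this with the last factor (the wild inertia at $\infty$). Writing $\Gal(F^{\mathrm{ab}}/F)=G_1\times G_2\times G_3$ for these three factors, the fields $E$, $K_T$, $L_\infty$ are the fixed fields of $G_2\times G_3$, $G_1\times G_3$, $G_1\times G_2$ respectively, so $\Gal(E/F)\cong G_1$, $\Gal(K_T/F)\cong G_2$, $\Gal(L_\infty/F)\cong G_3$. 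In a direct product of three groups any two of these coordinate subgroups generate the whole group, and the intersection of all three is trivial; translating through the Galois correspondence gives $E\cap(K_T L_\infty)=K_T\cap(E L_\infty)=L_\infty\cap(E K_T)=F$ — so the three extensions are mutually linearly disjoint over $F$ — and $E\cdot K_T\cdot L_\infty$ is the fixed field of the trivial subgroup, i.e.\ $E\cdot K_T\cdot L_\infty=F^{\mathrm{ab}}$.

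The step I expect to be the main obstacle is the matching step: verifying that the Carlitz torsion fields (both the usual ones at finite primes and those at $\infty$) realise exactly the asserted subgroups of $C_F$ under reciprocity — equivalently, controlling their ramification precisely (no superfluous ramification at other finite primes, exact tame inertia $\F_q^\times$ at $\infty$, constant field exactly $\F_q$, and that the Artin map is the expected projection). This is precisely the substance of Hayes' theory of sgn-normalized rank-one Drinfeld modules, and in the write-up I would either reproduce the relevant ramification computations or simply cite \cite{Hay74}.
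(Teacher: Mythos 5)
Your argument is correct, and it is essentially Hayes' own proof: the paper does not give an independent argument for Theorem~\ref{cft} but simply cites \cite{Hay74} (Proposition~5.2 and Theorem~7.1), which is precisely the explicit class field theory you are reconstructing. The decomposition of the profinitely completed idele class group into a product $\widehat{\mathbb{Z}}\times\widehat{A}^\times\times U_\infty^{(1)}$ and the matching of the three factors with $\Gal(E/F)$, $\Gal(K_T/F)$, $\Gal(L_\infty/F)$ via the Carlitz module at the finite places and its $\F_q[1/T]$-analogue at $\infty$ is exactly the content of those cited results.
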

\begin{proof}
See \cite{Hay74} Proposition 5.2 and Theorem 7.1.
\end{proof}

\section{The case $q=2$}
In this section, we set $A=\mathbb{F}_2[T]$ and $F=\mathbb{F}_2(T)$. Let $\phi_T=T+g_1\tau+g_2\tau^2$ be a Drinfeld $A$-module over $F$ of rank $2$ where $g_1\in F$ and $g_2\in F^*$. Suppose further that the mod $(T+i)$ Galois representations are surjective for $i\in\{0,1\}$. As $\bar{\rho}_{\phi, (T+i)}: G_F\rightarrow \GL_2(\F_2)$ factor through $\Gal(F(\phi[T+i])/F)$, we have $\Gal(F(\phi[T+i])/F)\cong \GL_2(\F_2)\cong S_3$, the symmetric group of order $6$. As the theory for mod $(T+1)$ representations is similar to mod $(T)$ representations, we focus on the mod $(T)$ representation tempararily. Since $S_3$ has a normal subgroup of index $2$, we have a degree $2$ subextension $M$ of $F(\phi[T])$ over $F$. The extension $M/F$ is abelian, so Theorem \ref{cft} implies $M$ must lie in either $E$, $K_T$, or $L_\infty$.  

\begin{enumerate}
\item[Case(1):]$M\subset E$, i.e. the extension $M/F$ is a constant extension.\\
We are able to characterize which type of $\phi_T=T+g_1\tau+g_2\tau^2$ will give a degree $2$ constant subextension $M$. 

\begin{lem}\label{q=2}
Given a rank $2$ Drinfeld module $\phi_T=T+g_1\tau+g_2\tau^2$ over $\F_2(T)$ and under the assumption that the mod $(T+i)$ representation is surjective, $i\in\{0,1\}$, the degree $2$ subextension $M$ of $F(\phi[T+i])$ over $F$ is a constant extension if and only if $g_1=0$.

\end{lem}

\begin{proof}
It's obvious that when $g_1=0$, the intermediate field $M$ is the constant extension $F(\zeta_3)$ over $F$ obtained by joining the $3$rd root of unity.

On the other hand, from the assumption that $F(\phi[T])/F\cong S_3$ and $\phi_T(x)=Tx+g_1x^2+g_2x^4=x(T+g_1x+g_2x^3)$, we can deduce $T+g_1x+g_2x^3$ is irreducible over the intermediate field $M$. In other words, there is an element $y\in F(\phi[T])$ whose minimal polynomial over $M$ is $T+g_1x+g_2x^3$ and $F(\phi[T])=M(y)$. As the minimal polynomial of $y$ over $M$ has all coefficients lie in $F$, we may consider the cubic extension $F(y)/F$. By \cite{MaWa17} Corollary 1.4(a), there is a primitive element $z$ for $F(y)/F$ with characteristic polynomial of the form $x^3+x+b$, where $b=\frac{T^4g_2^2}{g_1^6}\in F^*$. Furthermore, in the \cite{MaWa17} page 8, they characterized the unique degree $2$ subextension $M$ of $F(\phi[T])$. We thus have $M/F$ is the Artin-Schrier extension with polynomial $x^2+x=\frac{1+b^2}{b^2}$. Because $b\neq 0$, the Artin-schrier extension is not a constant extension. Thus we have proved that $g_1\neq 0$ would imply $M/F$ is not a constant extension.

\end{proof}

Since this case happens only when the given Drinfeld module is of the form 
$$\phi_T=T+g_2\tau^2,$$
we can just compare the two extensions $F(\phi[T])/F$ and $F(\phi[T+1]/F)$. They both have the constant extension $F(\zeta_3)$ as their degree $2$ subextension over $F$. Hence the mod $(T)$ and mod $(T+1)$ Galois representation have entanglement. i.e. $F(\phi[T])\cap F(\phi[T+1])\supsetneq F$. Thus we can never achieve adelic surjectivity in this case.

\item[Case(2):]$M\subset K_T$, i.e. $M$ is contained in $F(C[\mathfrak{a}])$ for some ideal $\mathfrak{a}$ of $A$.\\
From the given information, we can have the following lattice of field extensions:

\begin{tikzpicture}[node distance=1cm]
\node(L) {$F(\phi[T])$};
\node(X) [below of=L]{};
\node(C) [right of=X]{$F(C[\mathfrak{a}])$};
\node(M) [below of=X] {$M$};
\node(F) [below of=M]{$F$};

\draw (L)--(M);
\draw (M)--(F);
\draw (C)--(M);
\end{tikzpicture}

Since $M/F$ is a degree $2$ extension, we know the polynomial that generates $\mathfrak{a}$ can not be of degree $1$. Thus $\mathfrak{a}$ is not equal to $(T)$. Now we consider the extension $K=F(\sqrt[3]{g_2})$ of $F$ and its composite with $F(\phi[T])$, we have the following lattice:

\begin{tikzpicture}[node distance=1cm]
\node(L) {$K(\phi[T])$};
\node(X) [below of=L]{};
\node(C) [right of=X]{$K(C[\mathfrak{a}])$};
\node(M) [below of=X] {$MK$};
\node(F) [below of=M]{$K$};

\draw (L)--(M);
\draw (M)--(F);
\draw (C)--(M);
\end{tikzpicture}

Now we can consider the rank $2$ Drinfeld module over $K$ defined by 
$$\psi_T=T+g_1'+\tau^2,\ \ \text{$g_1'=\frac{g_1}{\sqrt[3]{g_2}}$}.$$

From the theory of Weil pairing for Drinfeld modules(\cite{Hei03}, Proposition 4.7.1), we know $\det\circ\bar{\rho}_{\psi,\mathfrak{a}}=\bar{\rho}_{C,\mathfrak{a}}: \Gal(K^{{\rm{sep}}}/K)\rightarrow (A/\mathfrak{a})^*$. Hence we get an extension $K(\psi[\mathfrak{a}])$ from $K(C[\mathfrak{a}])$. Moreover, $\psi$ and $\phi$ are isomorphic as Drinfeld modules over $K$. Thus we have $K(\psi[\mathfrak{a}])\cong K(\phi[\mathfrak{a}])$. Combining this fact with the lattice over $K$ above, we get
$$K(\phi[T])\cap K(\phi[\mathfrak{a}])\supset MK\supsetneq K.$$
The last containment is proper because $M/F$ is a degree $2$ extension while $K/F$ is of degree $1$ or $3$.
With this containment relationship in hand, we want to prove the following:

\begin{lem}\label{4.2}
 $F(\phi[T]) \cap F(\phi[\mathfrak{a}])\supset M \supsetneq F$. 
\end{lem}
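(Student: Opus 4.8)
The plan is to descend, by an elementary Galois-degree count, the entanglement already obtained over $K$ down to the ground field $F$. Write $H=F(\phi[T])$ and $L=F(\phi[\mathfrak a])$; both are finite Galois extensions of $F$, being splitting fields of separable polynomials, and $M\subset H$ by construction. Since $K(\phi[T])=KH$ and $K(\phi[\mathfrak a])=KL$, the containment just established reads $MK\subset KH\cap KL$; in particular $M\subset MK\subset KL$. Thus everything reduces to proving $M\subset L$, for then $M\subset H\cap L$, and $[M:F]=2$ gives $H\cap L\supsetneq F$.

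If $K=F$ we are done: $M=MK\subset KL=L$. So assume $[K:F]=3$, i.e. $g_2$ is not a cube in $F$. As the statement of the lemma does not involve $K$, I am free to choose which root of $x^3-g_2$ to adjoin: take $\sqrt[3]{g_2}\in L$ if $g_2$ is a cube in $L$, and any root otherwise. With this choice $[KL:L]\in\{1,3\}$: either $\sqrt[3]{g_2}\in L$, or $x^3-g_2$ has no root in $L$ and hence, being cubic, is irreducible over $L$. If $[KL:L]=1$ then $M\subset MK\subset K(\phi[\mathfrak a])=KL=L$. If $[KL:L]=3$, suppose for contradiction $M\not\subset L$; then $M\cap L=F$, and since $M/F$ is Galois of degree $2$ we get $[ML:L]=2$, so $[ML:F]=2[L:F]$. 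But $F\subset ML\subset KL$ forces $[ML:F]\mid[KL:F]=3[L:F]$, i.e. $2\mid 3$, a contradiction. Hence $M\subset L$ in all cases.

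The one delicate point is exactly the one finessed by the choice of cube root: as a subfield of $F^{\mathrm{alg}}$ the field $F(\sqrt[3]{g_2})$ is not canonical, and an unlucky root yields $[KL:L]=2$, which defeats the parity count. Equivalently, and perhaps more cleanly, one can bypass the descent altogether: by the Weil pairing one has $\det\bar\rho_{\phi,\mathfrak a}=\bar\rho_{C,\mathfrak a}$ as characters of $G_F$ (for $q=2$ every rank-$1$ Drinfeld module over $F$ is $F$-isomorphic to the Carlitz module), whence $\ker\bar\rho_{\phi,\mathfrak a}\subset\ker\bar\rho_{C,\mathfrak a}$ and therefore $F(C[\mathfrak a])\subset F(\phi[\mathfrak a])=L$; combined with the Case~$(2)$ hypothesis $M\subset F(C[\mathfrak a])$ this gives $M\subset L$ at once. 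Either way, the only input beyond elementary field theory is this Weil-pairing identification of $\det\bar\rho_\phi$ with the Carlitz character, so that — rather than the degree bookkeeping — is where I expect the real content to lie.
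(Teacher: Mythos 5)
Your proof is correct, and both of your arguments go through; it is worth comparing them with the paper's. Your first argument follows the paper's overall route: reduce to showing $M\subset F(\phi[\mathfrak a])$, use the containment $MK\subset K(\phi[\mathfrak a])$ established before the lemma, and split according to $[K(\phi[\mathfrak a]):F(\phi[\mathfrak a])]\in\{1,3\}$. Where you differ is the endgame: the paper invokes the theorem of natural irrationalities for $MK$ over $F$ and then does bookkeeping inside the degree-$6$ extension $MK/F$, whereas you run a one-line divisibility count ($M\not\subset L$ would force $[ML:L]=2$ to divide $[KL:L]=3$) that uses only $[M:F]=2$; this is cleaner, and it leans only on $M/F$ being Galois rather than on anything about $MK/F$, which need not be Galois. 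Your caution about the choice of cube root is legitimate but in fact moot: $F(\phi[\mathfrak a])/F$ is Galois, and in the case $[K:F]=3$ the polynomial $x^3-g_2$ is irreducible over $F$, so over $F(\phi[\mathfrak a])$ it either splits completely or remains irreducible --- the degree-$2$ scenario cannot occur for any root, which is what the paper implicitly uses when it asserts the index is $1$ or $3$. Your alternative argument is a genuinely different and shorter route that bypasses $K$ altogether, and it is valid: since $q=2$ gives $q-1=1$, the determinant rank-one module $T+g_2\tau$ of $\phi$ is already $F$-isomorphic to the Carlitz module (via $u=g_2$), so $\det\circ\bar\rho_{\phi,\mathfrak a}=\bar\rho_{C,\mathfrak a}$ as characters of $G_F$, hence $F(C[\mathfrak a])\subset F(\phi[\mathfrak a])$ and the Case (2) hypothesis $M\subset F(C[\mathfrak a])$ gives $M\subset F(\phi[\mathfrak a])$ at once. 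The paper needs $\sqrt[3]{g_2}$ only because it normalizes the rank-two module itself (which costs $c^{q^2-1}=g_2$, i.e.\ a cube root), while identifying the rank-one determinant with Carlitz costs only a $(q-1)$-st root, which is free when $q=2$; the main thing the paper's longer route buys is that it runs in parallel with the $q=3$ argument later, where the analogous normalization genuinely requires a nontrivial extension of $F$.
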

\begin{proof}
It's enough to prove $F(\phi[\mathfrak{a}])\supset M$. If the degree of the extension $K/F$ is equal to $1$, then it's clear that $F(\phi[\mathfrak{a}])\supset M$. Hence we assume $K/F$ is a degree $3$ extension. Let's consider the following lattice of field extensions:

\begin{tikzpicture}[node distance=1cm]
\node(L) {$K(\phi[\mathfrak{a}])=KF(\phi[\mathfrak{a}])$};
\node(X) [below of=L]{};
\node(MK) [left of=X]{$MK$};
\node(FA) [right of=X]{$F(\phi[\mathfrak{a}])$};
\node(X1) [below of=MK]{};
\node(K) [left of=X1] {$K$};
\node(MKF) [right of=X1]{$MK\cap F(\phi[\mathfrak{a}])$};
\node(F)[below of=X1]{$F$};

\draw (L)--(MK);
\draw (L)--(FA);
\draw (MK)--(MKF);
\draw(FA)--(MKF);
\draw (MK)--(K);
\draw (MKF)--(F);
\draw(K)--(F);
\end{tikzpicture}

Firstly, we have $[K(\phi[\mathfrak{a}]):F(\phi[\mathfrak{a}])]=3\ \text{or}\ 1$. If the index is equal to $1$, then $$F(\phi[\mathfrak{a}])=K(\phi[\mathfrak{a}])\supset MK\supset M.$$
If the index is equal to $3$, then we apply the theorem of natural irrationalities to get
$$[MK:MK\cap F(\phi[\mathfrak{a}])]=[K(\phi[\mathfrak{a}]):F(\phi[\mathfrak{a}])]=3.$$
This implies $[MK\cap F(\phi[\mathfrak{a}]):F]=2$ because $[MK:F]=6$. This forces 
$MK\cap F(\phi[\mathfrak{a}])=M$, otherwise the compositum of $M$ and $MK\cap F(\phi[\mathfrak{a}])$ becomes a degree $4$ subextension of $MK$ over $F$. Thus $F(\phi[\mathfrak{a}])\supset M$.

\end{proof}
Thus there is an entanglement between mod $(T)$ and mod $\mathfrak{a}$ Galois representations associated to the Drinfeld module $\phi$.  We summarize this as the following lemma:

\begin{lem}\label{q=2.1}
Given  a rank $2$ Drinfeld module $\phi_T=T+g_1\tau+g_2\tau^2$ over $\F_2(T)$ and under the assumption that the mod $(T)$ representations are surjective. If the degree $2$ subextension $M$ of $F(\phi[T])$ over $F$ lies in some Carlitz extension $F(C[\mathfrak{a}])$, then $F(\phi[T]) \cap F(\phi[\mathfrak{a}])\supsetneq F$.

\end{lem}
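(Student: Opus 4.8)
The plan is to observe that essentially everything needed has already been assembled in the discussion preceding the statement, and to package it. Since $M$ is a degree-$2$, hence proper, subextension of $F(\phi[T])/F$, it suffices to prove $M\subseteq F(\phi[\mathfrak{a}])$; then $F(\phi[T])\cap F(\phi[\mathfrak{a}])\supseteq M\supsetneq F$ is immediate.

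First I would work over the auxiliary field $K=F(\sqrt[3]{g_2})$ and use the three facts established above: (i) over $K$ the module $\phi$ is isomorphic to $\psi_T=T+g_1'\tau+\tau^2$ with $g_1'=g_1/\sqrt[3]{g_2}$, so $K(\phi[\mathfrak{b}])=K(\psi[\mathfrak{b}])$ for every ideal $\mathfrak{b}$; (ii) because $\psi$ has leading coefficient $1$, the Weil-pairing identity $\det\circ\bar{\rho}_{\psi,\mathfrak{a}}=\bar{\rho}_{C,\mathfrak{a}}$ from \cite{Hei03} forces $K(C[\mathfrak{a}])\subseteq K(\psi[\mathfrak{a}])$; (iii) $M\subseteq F(C[\mathfrak{a}])\subseteq K(C[\mathfrak{a}])$ by hypothesis. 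Chaining these gives $MK\subseteq K(\psi[\mathfrak{a}])=K(\phi[\mathfrak{a}])$, and also $MK\subseteq K(\phi[T])$ since $M\subseteq F(\phi[T])$. Because $[M:F]=2$ is coprime to $[K:F]\in\{1,3\}$, we have $M\cap K=F$, so $[MK:K]=2$ and thus $K(\phi[T])\cap K(\phi[\mathfrak{a}])\supseteq MK\supsetneq K$.

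The only substantive remaining step is to descend this entanglement over $K$ to one over $F$, i.e.\ Lemma \ref{4.2}. If $[K:F]=1$ there is nothing to do. If $[K:F]=3$, I would apply the theorem of natural irrationalities to $MK$ and $F(\phi[\mathfrak{a}])$ inside $K(\phi[\mathfrak{a}])=K\cdot F(\phi[\mathfrak{a}])$, obtaining $[MK:MK\cap F(\phi[\mathfrak{a}])]=[K(\phi[\mathfrak{a}]):F(\phi[\mathfrak{a}])]\in\{1,3\}$; since $[MK:F]=6$, the field $MK\cap F(\phi[\mathfrak{a}])$ has degree $6$ or $2$ over $F$. In the first case $MK\subseteq F(\phi[\mathfrak{a}])$, so $M\subseteq F(\phi[\mathfrak{a}])$. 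In the second case, this quadratic field is Galois over $F$ (being a degree-$2$ subextension of the Galois extension $F(\phi[\mathfrak{a}])/F$), and if it were distinct from $M$ its compositum with $M$ would be a degree-$4$ subextension of $MK/F$, impossible since $4\nmid 6$; hence it equals $M$ and $M\subseteq F(\phi[\mathfrak{a}])$.

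The main obstacle is not conceptual but organizational: one has to be careful that the Weil-pairing comparison with the Carlitz module is clean only after passing to $K$, where the leading coefficient becomes a unit, and that it is precisely the coprimality $\gcd(2,3)=1$ that makes both the properness $MK\supsetneq K$ and the uniqueness of the quadratic descent go through. With Lemma \ref{4.2} in hand the stated conclusion $F(\phi[T])\cap F(\phi[\mathfrak{a}])\supsetneq F$ follows at once.
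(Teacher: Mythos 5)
Your proposal is correct and follows essentially the same route as the paper: pass to $K=F(\sqrt[3]{g_2})$ to normalize the leading coefficient, use the Weil-pairing identity to get $MK\subseteq K(\phi[\mathfrak{a}])$, and then descend to $F$ via natural irrationalities exactly as in Lemma \ref{4.2}, splitting on $[K:F]\in\{1,3\}$ and using that a second quadratic subextension of $MK/F$ distinct from $M$ would force a degree-$4$ subfield of the degree-$6$ extension $MK/F$. The only cosmetic difference is that you fold the index-$1$ and index-$3$ cases into one natural-irrationalities computation, which does not change the argument.
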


The proof for mod $(T+1)$ Galois representation is essentially the same. Thus for Drinfeld modules $\phi_T=T+g_1\tau+g_2\tau^2$ that belong to this case, we will never attain adelic surjectivity.\\

\item[Case(3):]$M\subset L_\infty$, i.e. $M/F$ is wildly ramified at infinity.

If the subextension $M$ of $F(\phi[T])$ belongs to this case, then from \cite{MaWa17} page 8, we can characterize the subextension $M$ of $F(\phi[T])$. We have $M/F$ is the Artin-Schrier extension with polynomial $x^2+x=\frac{1+b^2}{b^2}$, where $b=\frac{T^4g_2^2}{g_1^6}$ (from Lemma \ref{q=2}, we know that $M/F$ is not constant extension if and only if $g_1\neq0$, hence the fraction is well-defined). Let $\alpha$ and $\alpha+1$ be the roots of $x^2+x=\frac{1+b^2}{b^2}$ and let $v_\infty:F^*\rightarrow \mathbb{Z}$ be the valuation of $F$ at infinity defined by $v_\infty(\frac{1}{T})=1$. Now we try to compute the valuation $v_\infty(\alpha)$. By computing the valuation of the constant term of $x^2+x=\frac{1+b^2}{b^2}=(x+\alpha)(x+\alpha+1)$, we have
$$2\cdot v_\infty(\frac{1+b}{b})=v_\infty(\frac{1+b^2}{b^2})=v_\infty(\alpha)+v_\infty(\alpha+1)=v_\infty(\alpha)+{\rm{min}}\{v_\infty(\alpha), 0\}.$$
This implies $v_\infty(\alpha)\in\mathbb{Z}$ because the left hand side of the above equality is an even integer.

Since $M/F$ is totally wildly ramified at infinity of degree $2$ and $M$ can be written as $F(\alpha)$, we have $v_\infty(c_1\alpha+c_2)=\frac{1}{2}$ for some $c_1 \text{ and } c_2\in F$, which is impossible. Thus in our case the degree $2$ subextension $M$ of $F(\phi[T])$ over $F$ can not be wildly ramified at infinity.

\end{enumerate}

Combining the three cases together, we thus prove the following theorem:
\begin{thm}\label{q=2}
Let $A=\mathbb{F}_2[T]$ and $F=\mathbb{F}_2(T)$. Let $\phi_T=T+g_1\tau+g_2\tau^2$ be a Drinfeld $A$-module over $F$ of rank $2$ where $g_1\in F$ and $g_2\in F^*$. The adelic Galois representation
$${\rho}_{\phi}:G_F\longrightarrow \varprojlim_{\mathfrak{a}}{\rm{Aut}}(\phi[\mathfrak{a}])\cong {\rm{GL_2}}(\widehat{A})$$
is not surjective.

\end{thm}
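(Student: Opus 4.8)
The plan is to combine the three cases that have just been worked out. Assume for contradiction that $\rho_\phi$ is surjective. Then every mod-$\mathfrak{l}$ representation is surjective; in particular the mod-$(T)$ and mod-$(T+1)$ representations are surjective, so $\Gal(F(\phi[T])/F)\cong\Gal(F(\phi[T+1])/F)\cong\GL_2(\F_2)\cong S_3$. Since $S_3$ has a unique index-$2$ subgroup, $F(\phi[T])$ contains a unique quadratic subextension $M/F$, and likewise $F(\phi[T+1])$ contains a unique quadratic subextension $M'/F$. Both $M$ and $M'$ are abelian over $F$, so by Theorem \ref{cft} each lies in one of $E$, $K_T$, or $L_\infty$.

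First I would dispose of the case $M\subset L_\infty$: by the valuation computation at infinity carried out in Case (3), such an $M$ would be totally wildly ramified at $\infty$ of degree $2$ over $F$, while the Artin--Schreier description of $M$ forces $v_\infty$ to take an integer value on a generator of $M/F$, contradicting $v_\infty(c_1\alpha+c_2)=\tfrac12$; the same argument applies to $M'$. So $M$ (and $M'$) lies in $E$ or $K_T$. Next, if $M\subset K_T$ then $M\subset F(C[\mathfrak{a}])$ for some ideal $\mathfrak{a}\neq(T)$, and Lemma \ref{q=2.1} (via the isomorphism $\psi\cong\phi$ over $K=F(\sqrt[3]{g_2})$ and the Weil-pairing identity $\det\circ\bar\rho_{\psi,\mathfrak{a}}=\bar\rho_{C,\mathfrak{a}}$, together with natural irrationalities as in Lemma \ref{4.2}) gives $F(\phi[T])\cap F(\phi[\mathfrak{a}])\supsetneq F$; this is a nontrivial entanglement, so $\rho_\phi$ cannot be surjective, a contradiction. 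The only remaining possibility is $M\subset E$, i.e. $M/F$ is a constant extension.

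By Lemma \ref{q=2}, $M\subset E$ forces $g_1=0$, so $\phi_T=T+g_2\tau^2$. But then the same lemma, applied to the mod-$(T+1)$ representation, shows $M'\subset E$ as well (the argument is symmetric in $i\in\{0,1\}$), and in fact both $M$ and $M'$ equal the unique quadratic constant extension $F(\zeta_3)=\F_4(T)$ of $F$. Hence $F(\zeta_3)\subseteq F(\phi[T])\cap F(\phi[T+1])$, so $F(\phi[T])$ and $F(\phi[T+1])$ are not linearly disjoint over $F$; this again contradicts surjectivity of $\rho_\phi$, since surjectivity would force $\Gal(F(\phi[T])F(\phi[T+1])/F)\cong\GL_2(\F_2)\times\GL_2(\F_2)$ with the two factors intersecting trivially. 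In every case we reach a contradiction, so $\rho_\phi$ is not surjective.

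The main obstacle is making the entanglement in Case (2) genuinely unconditional: one must be careful that passing to $K=F(\sqrt[3]{g_2})$ and back down to $F$ really does produce a nontrivial intersection $F(\phi[T])\cap F(\phi[\mathfrak{a}])$ rather than merely $K(\phi[T])\cap K(\phi[\mathfrak{a}])\supsetneq K$ — this is exactly the content of Lemma \ref{4.2}, whose proof rests on the degrees $[K:F]\in\{1,3\}$ being coprime to $[M:F]=2$ and on the theorem of natural irrationalities; I would double-check that $\mathfrak{a}\neq(T)$ is used correctly so that the Carlitz extension involved is genuinely new. The other steps (Cases (1) and (3)) are already essentially complete in the excerpt and only need to be assembled.
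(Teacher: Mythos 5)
Your proposal is correct and follows essentially the same route as the paper: after reducing to surjective mod $(T)$ and mod $(T+1)$ representations, you run the same trichotomy from Hayes' theorem (constant, Carlitz, wildly ramified at infinity), invoke the same lemmas for the constant and Carlitz cases, and rule out the $L_\infty$ case by the same valuation argument, which is exactly how the paper assembles its Theorem. The only difference is cosmetic (you phrase it as a single proof by contradiction rather than a case summary), so nothing further is needed.
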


\section{The case $q=3$}
In this section, we set $A=\mathbb{F}_3[T]$ and $F=\mathbb{F}_3(T)$. Let $\phi_T=T+g_1\tau+g_2\tau^2$ be a Drinfeld $A$-module over $F$ of rank $2$ where $g_1\in F$ and $g_2\in F^*$.

\subsection{Assume $-g_2\in F^*$ non-square}\ \\
In this subsection, we will prove that it's impossible to have adelic surjectivity for Drinfeld $A$-modules of this type.\\
At the beginning of this subsection, we assume further that the mod $(T+i)$ Galois representation $\bar{\rho}_{\phi,(T+i)}$ of $\phi$ is surjective for all $i\in\{0,\pm1\}$. Therefore, we are able to consider the following composition of surjective maps:
$$\alpha:G_F\xrightarrow{\bar{\rho}_{\phi,(T+i)}}\GL_2(\F_3)\xrightarrow{\det}\{\pm1\}.$$
Hence there is a degree $2$ subextension $M_i$ of $F(\phi[T+i])$ over $F$. By Theorem \ref{cft}, the subextension $M_i$ over $F$ can only be a constant extension or  contained in a Carlitz extension because $L_\infty$ is a union of field extensions of $3$-power degree. Moreover, $F(\sqrt{-1})$ is the only degree $2$ constant extension over $F$. If any two of $M_i$ are constant extensions over $F$, say $M_{i_1}$ and $M_{i_2}$ where $i_1,\ i_2\in\{0,\pm1\}$, then we have $F(\phi[T+i_1])\cap F(\phi[T+i_2])\supset F(\sqrt{-1})\supsetneq F$. Thus we have proved the following lemma:

\begin{lem}\label{q=3const}
If any two of $M_i$ are constant extensions over $F$, then the two mod $(T+i)$ Galois representations $\bar{\rho}_{\phi,(T+i)}$ will have entanglement. Hence the adelic Galois representation for $\phi$ is not surjective

\end{lem}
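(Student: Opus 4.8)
The plan is to combine the uniqueness of the quadratic constant extension of $F=\mathbb{F}_3(T)$ with the standard observation that adelic surjectivity forces the division fields at coprime levels to be linearly disjoint.

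First I would recall the structure of constant extensions. Since $\mathbb{F}_3$ admits a unique extension of each degree, the maximal constant extension $E/F$ has a unique quadratic subextension, namely $\mathbb{F}_9(T)=F(\sqrt{-1})$ (indeed $-1$ is the unique non-square in $\mathbb{F}_3^*$, so $x^2+1$ is irreducible over $\mathbb{F}_3$). Consequently, every degree-$2$ constant extension of $F$ equals $F(\sqrt{-1})$.

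Next, suppose $M_{i_1}$ and $M_{i_2}$, for distinct $i_1,i_2\in\{0,\pm1\}$, are both constant extensions of $F$. By the previous paragraph both equal $F(\sqrt{-1})$, and since by construction $M_{i_j}\subseteq F(\phi[T+i_j])$ we obtain
$$F(\sqrt{-1})=M_{i_1}=M_{i_2}\subseteq F(\phi[T+i_1])\cap F(\phi[T+i_2]),$$
so this intersection strictly contains $F$; this is precisely the claimed entanglement between $\bar{\rho}_{\phi,(T+i_1)}$ and $\bar{\rho}_{\phi,(T+i_2)}$.

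Finally I would deduce non-surjectivity of $\rho_\phi$. Put $\mathfrak{a}=(T+i_1)$ and $\mathfrak{b}=(T+i_2)$; these are distinct maximal ideals of $A$, hence coprime, so $\phi[\mathfrak{a}\mathfrak{b}]\cong\phi[\mathfrak{a}]\oplus\phi[\mathfrak{b}]$ as $A$-modules and $\mathrm{GL}_2(A/\mathfrak{a}\mathfrak{b})\cong\mathrm{GL}_2(A/\mathfrak{a})\times\mathrm{GL}_2(A/\mathfrak{b})$ by the Chinese Remainder Theorem. If $\rho_\phi$ were surjective, then the induced map $G_F\to\mathrm{GL}_2(A/\mathfrak{a}\mathfrak{b})$ would be surjective as well, i.e. $\Gal(F(\phi[\mathfrak{a}\mathfrak{b}])/F)\cong\Gal(F(\phi[\mathfrak{a}])/F)\times\Gal(F(\phi[\mathfrak{b}])/F)$, which forces $F(\phi[\mathfrak{a}])$ and $F(\phi[\mathfrak{b}])$ to be linearly disjoint over $F$; in particular $F(\phi[\mathfrak{a}])\cap F(\phi[\mathfrak{b}])=F$, contradicting the displayed inclusion. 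Hence $\rho_\phi$ is not surjective. The only point requiring care — there is no genuine obstacle here — is to read ``constant'' as ``contained in $E$'' so that the uniqueness of the quadratic constant extension applies, and to keep in mind that the hypotheses (surjectivity of each $\bar{\rho}_{\phi,(T+i)}$, and the $M_i$ arising as the fixed fields of the determinant map) are in force; beyond this, only Theorem \ref{cft} and elementary Galois theory are needed.
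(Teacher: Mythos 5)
Your proof is correct and follows essentially the same route as the paper: since $F(\sqrt{-1})=\mathbb{F}_9(T)$ is the unique quadratic constant extension of $F$, both $M_{i_1}$ and $M_{i_2}$ equal it, so it lies in $F(\phi[T+i_1])\cap F(\phi[T+i_2])$, giving the entanglement. Your closing CRT/linear-disjointness argument merely makes explicit the standard implication (entanglement $\Rightarrow$ non-surjectivity of $\rho_\phi$) that the paper leaves implicit, so there is nothing to correct.
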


If there is at most one constant extension among $M_i$, then there must be two other intermediate fields $M_{i_1}$ and $M_{i_2}$ with $i_1, i_2\in \{0,\pm1\}$ contained in the Carlitz extension of $F$. In other words, there are ideals $\mathfrak{a}_{i_1}$ and $\mathfrak{a}_{i_2}$ of $A$ such that $M_{i_1}\subset F(C[\mathfrak{a}_{i_1}])$ and $M_{i_2}\subset F(C[\mathfrak{a}_{i_2}])$.

\begin{claim}
$\mathfrak{a}_{i_1}\neq(T+i_1)$ and $\mathfrak{a}_{i_2}\neq(T+i_2)$.
\end{claim}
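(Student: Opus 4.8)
The plan is to pin down the quadratic field $M_{i_1}$ explicitly and then to observe that the standing hypothesis — that $-g_2$ is a non-square in $F^*$ — is exactly what prevents $M_{i_1}$ from coinciding with $F(C[(T+i_1)])$. Since $\Gal(F(C[(T+i_1)])/F)\cong(A/(T+i_1))^*\cong\mathbb{F}_3^*$ has order $2$, a containment $M_{i_1}\subseteq F(C[(T+i_1)])$ would force $M_{i_1}=F(C[(T+i_1)])$ (both being quadratic over $F$); so it is enough to prove $M_{i_1}\neq F(C[(T+i_1)])$, and symmetrically for $i_2$. Thus the entire claim reduces to computing $M_{i_1}$ and comparing it with $F(C[(T+i_1)])$.

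First I would recall that $M_{i_1}$ is the fixed field of $\det\circ\bar\rho_{\phi,(T+i_1)}\colon G_F\to\mathbb{F}_3^*=\{\pm1\}$, the unique quadratic subextension of $F(\phi[T+i_1])/F$ (whose Galois group $\GL_2(\mathbb{F}_3)$ has a unique index-$2$ subgroup, $\SL_2(\mathbb{F}_3)$). Next I would invoke the Weil pairing for Drinfeld modules (\cite{Hei03}, Proposition 4.7.1), together with the leading-coefficient twist (computed as in \cite{Zy11}), to write $\det\circ\bar\rho_{\phi,(T+i_1)}=\bar\rho_{C,(T+i_1)}\cdot\chi$, where $\bar\rho_{C,(T+i_1)}$ is the mod-$(T+i_1)$ Carlitz character cutting out $F(C[(T+i_1)])$ and $\chi$ is the quadratic character of $G_F$ cutting out $F(\sqrt{-g_2})$; the hypothesis on $-g_2$ makes $\chi$ nontrivial. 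A direct computation with $C_{T+i_1}(x)=(T+i_1)x+x^3$ then gives $F(C[(T+i_1)])=F(\sqrt{-(T+i_1)})$. Since $M_{i_1}$ is a genuine quadratic extension, $\det\circ\bar\rho_{\phi,(T+i_1)}$ is nontrivial, so $\bar\rho_{C,(T+i_1)}$ and $\chi$ are distinct quadratic characters, and their product therefore cuts out the third quadratic subfield of $F(\sqrt{-(T+i_1)},\sqrt{-g_2})$, namely $M_{i_1}=F(\sqrt{(T+i_1)g_2})$. Finally, $F(\sqrt{(T+i_1)g_2})=F(\sqrt{-(T+i_1)})$ would force $(T+i_1)g_2\cdot(-(T+i_1))=-(T+i_1)^2g_2$ to be a square in $F^*$, i.e. $-g_2$ to be a square — contrary to hypothesis. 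Hence $M_{i_1}\neq F(C[(T+i_1)])$, so $\mathfrak{a}_{i_1}\neq(T+i_1)$, and running the same argument for $i_2$ gives $\mathfrak{a}_{i_2}\neq(T+i_2)$.

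The step I expect to be the main obstacle is pinning down the twist $\chi$ exactly — showing it cuts out $F(\sqrt{-g_2})$ rather than $F(\sqrt{g_2})$ or the trivial extension. This rests on the precise normalization in the determinant formula (the sign $(-1)^{r-1}$ coming from the second exterior power, and the Carlitz reciprocity law identifying $\bar\rho_{C,(T+i_1)}(\mathrm{Frob}_\mathfrak{q})$ with the monic generator of $\mathfrak{q}$), and I would verify it by evaluating both sides on Frobenius elements at primes $\mathfrak{q}$ of good reduction, where $\det\circ\bar\rho_{\phi,(T+i_1)}(\mathrm{Frob}_\mathfrak{q})$ equals the constant term of the characteristic polynomial of the $\mathfrak{q}$-Frobenius. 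Everything after that — the degree count $[F(C[(T+i_1)]):F]=2$, the biquadratic-field bookkeeping, and the square/non-square comparison — is routine, and it makes transparent why the hypothesis "$-g_2$ non-square" is indispensable here: were $-g_2$ a square, $M_{i_1}$ would equal $F(C[(T+i_1)])$ and the claim would be false.
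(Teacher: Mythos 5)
Your proof is correct and essentially the same as the paper's. The paper invokes Hei03, Proposition 4.7.1 to write $\det\circ\bar\rho_{\phi,(T+i_1)}=\bar\rho_{\psi,(T+i_1)}$ with $\psi_T=T-g_2\tau$, computes $M_{i_1}=F(\psi[T+i_1])=F\bigl(\sqrt{(T+i_1)/g_2}\bigr)$ and $F(C[T+i_1])=F\bigl(\sqrt{-(T+i_1)}\bigr)$, and compares; your reformulation of the determinant as $\bar\rho_{C,(T+i_1)}\cdot\chi$ with $\chi$ cutting out $F(\sqrt{-g_2})$ is exactly the statement that $\psi$ and $C$ become isomorphic over $F(\sqrt{-g_2})$, and your ``third quadratic subfield'' $F\bigl(\sqrt{(T+i_1)g_2}\bigr)$ coincides with the paper's $F\bigl(\sqrt{(T+i_1)/g_2}\bigr)$ up to the square $g_2^2$, after which the square/non-square comparison is identical.
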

\begin{proof}
This proof relies on the assumption that $-g_2\in F^*$ is a non-square element. We will prove the claim for $\mathfrak{a}_{i_1}$, the proof for $\mathfrak{a}_{i_2}$ is the same.
From our construction of the map $\alpha$, we can see that the kernel of the composition of maps
$$\alpha:G_F\xrightarrow{\bar{\rho}_{\phi,(T+i)}}\GL_2(\F_3)\xrightarrow{\det}\{\pm1\}$$
is exactly $\SL_2(\F_3)$. 
We have $\det\circ\bar{\rho}_{\phi,(T+i_1)}=\bar{\rho}_{\psi,(T+i_1)}$ from Proposition 4.7.1 of \cite{Hei03}, where $\psi_T=T-g_2\tau$ is a rank $1$ Drinfeld $A$-module. Hence $M_{i_1}=F(\psi[T+i_1])=F(\sqrt{\frac{T+i_1}{g_2}})$. As $-g_2\in F^*$ is a non-square element, the field $F(\sqrt{\frac{T+i_1}{g_2}})$ is not equal to $F(C[T+i_1])=F(\sqrt{-(T+i_1)})$. This shows $\mathfrak{a}_{i_1}\neq(T+i_1)$
\end{proof}
The next part is to prove the following lemma, which is similar to what happened in the case ``$q=2$''.
\begin{lem}\label{lem5.2}
If the degree $2$ subextension $M_{i_1}$ of $F(\phi[T+i_1])$ over $F$ lies in some Carlitz extension $F(C[\mathfrak{a}_{i_1}])$, then $K(\phi[T+i_1]) \cap K(\phi[\mathfrak{a}_{i_1}])\supset M_{i_1}K$ where $K=F(\sqrt[8]{-g_2})$. We have the same statement when $i_1$ is replaced by $i_2$.
\end{lem}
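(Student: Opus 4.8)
The plan is to mimic the structure used in Case (2) of the $q=2$ analysis, adapted to the prime $3$ setting. The key point is that the Weil pairing gives $\det\circ\bar\rho_{\phi,\mathfrak{a}}=\bar\rho_{C,\mathfrak{a}}$, so the determinant representation of $\phi$ is governed by the Carlitz module; after a suitable base change we can realize $\phi$ up to isomorphism as a Drinfeld module with leading coefficient $1$, and then relate its $\mathfrak{a}$-division field to the Carlitz $\mathfrak{a}$-division field. Concretely, I would first pass to the field $K=F(\sqrt[8]{-g_2})$ and form the twisted module $\psi_T=T+g_1'\tau+g_2'\tau^2$ isomorphic to $\phi$ over $K$, where the substitution $c\cdot\phi_T=\psi_T\cdot c$ with $c=\sqrt[3]{g_2}^{\,-1}$ (or the appropriate root) makes the leading coefficient a unit; the eighth root (rather than a cube root) is needed so that $K$ also contains $\sqrt{-g_2}$, which is the field $M_{i_1}=F(\sqrt{(T+i_1)/g_2})$ used in the Claim. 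Thus $M_{i_1}K=K(\sqrt{T+i_1})$ and this lies in $K(C[\mathfrak{a}_{i_1}])$ since $M_{i_1}\subset F(C[\mathfrak{a}_{i_1}])$.

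Next I would invoke $K(\psi[\mathfrak{a}_{i_1}])\cong K(\phi[\mathfrak{a}_{i_1}])$ (isomorphic Drinfeld modules over $K$ have equal division fields) together with the surjectivity of $\det\circ\bar\rho_{\psi,\mathfrak{a}_{i_1}}=\bar\rho_{C,\mathfrak{a}_{i_1}}$ onto $(A/\mathfrak{a}_{i_1})^*$, which shows that $K(C[\mathfrak{a}_{i_1}])\subset K(\psi[\mathfrak{a}_{i_1}])=K(\phi[\mathfrak{a}_{i_1}])$. Since $M_{i_1}K\subset K(C[\mathfrak{a}_{i_1}])$, we get $M_{i_1}K\subset K(\phi[\mathfrak{a}_{i_1}])$. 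On the other hand, $M_{i_1}\subset F(\phi[T+i_1])$ by construction (it is the quadratic subextension cut out by $\det$), so $M_{i_1}K\subset K(\phi[T+i_1])$. Intersecting the two containments yields $K(\phi[T+i_1])\cap K(\phi[\mathfrak{a}_{i_1}])\supset M_{i_1}K$, which is exactly the assertion. The argument for $i_2$ is word-for-word identical since the Claim already established $\mathfrak{a}_{i_2}\neq(T+i_2)$ and $M_{i_2}=F(\sqrt{(T+i_2)/g_2})\subset K(\sqrt{T+i_2})$.

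The main obstacle I anticipate is bookkeeping the base change correctly: one must check that $K=F(\sqrt[8]{-g_2})$ is simultaneously large enough to absorb the cube root needed for the isomorphism $\phi\simeq\psi$ over $K$ and to contain $\sqrt{-g_2}$ so that $M_{i_1}K$ simplifies to $K(\sqrt{T+i_1})$, while being careful that these square/cube/eighth roots interact consistently (e.g. $\sqrt[8]{-g_2}$ does give both a cube root of $-g_2$ — hence of $g_2$ up to sign, which is fine in characteristic $3$ — after raising to appropriate powers, and a square root of $-g_2$). A secondary subtlety is that over $K$ the module $\psi$ might no longer have surjective mod-$(T+i_1)$ image, but this is irrelevant: we only need the \emph{inclusion} $K(C[\mathfrak{a}_{i_1}])\subset K(\phi[\mathfrak{a}_{i_1}])$ coming from the determinant, not surjectivity, and we only need $M_{i_1}K\subset K(\phi[T+i_1])$ coming from the original definition of $M_{i_1}$ over $F$. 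Once these containments are in place the lemma is immediate; no delicate field-degree computation (as in Lemma \ref{4.2}) is required at this stage, since the descent back down to $F$ will be handled separately.
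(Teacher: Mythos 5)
Your overall strategy coincides with the paper's: base-change to $K=F(\sqrt[8]{-g_2})$, replace $\phi$ by an isomorphic module $\psi$ whose determinant representation is the Carlitz one, deduce $K(C[\mathfrak{a}_{i_1}])\subset K(\psi[\mathfrak{a}_{i_1}])=K(\phi[\mathfrak{a}_{i_1}])$, and intersect with $M_{i_1}K\subset K(\phi[T+i_1])$; no surjectivity and no degree count are needed, exactly as you say. The gap is in the normalization step, which is the only real content of the lemma. For $q=3$, an isomorphism $c\phi_Tc^{-1}=\psi_T$ rescales the top coefficient by $c^{-(q^2-1)}=c^{-8}$, and by the Weil-pairing result (Proposition 4.7.1 of \cite{Hei03}, as used in the Claim) the determinant of a rank-$2$ module with top coefficient $g_2'$ is the rank-$1$ module $T-g_2'\tau$; this equals the Carlitz module $T+\tau$ precisely when $g_2'=-1$. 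Hence the twisting constant must satisfy $c^{8}=-g_2$, i.e. $c=\sqrt[8]{-g_2}$, and this — not the presence of $\sqrt{-g_2}$ in $K$ — is why the eighth root appears in the statement; the paper takes $\psi_T=T+\frac{g_1}{\sqrt[4]{-g_2}}\tau-\tau^2$. Your proposal instead aims for leading coefficient $1$ via $c=\sqrt[3]{g_2}^{\,-1}$, carried over from the $q=2$ case where $q^2-1=3$. As written this breaks the key inclusion twice over: a cube root of $g_2$ need not lie in $K$ at all (e.g. $-g_2=T$ gives $K=\F_3(T^{1/8})$, which contains no cube root of $T$), and even granting the twist, top coefficient $+1$ makes the determinant the representation of $T-\tau$, which is only a quadratic twist of the Carlitz module, isomorphic to it over $F(\sqrt{-1})$; since $\sqrt{-1}$ need not lie in $K$, the containment $K(C[\mathfrak{a}_{i_1}])\subset K(\psi[\mathfrak{a}_{i_1}])$ does not follow.

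A second, minor slip: $M_{i_1}K=K(\sqrt{-(T+i_1)})$, which equals $K(\sqrt{T+i_1})$ only if $\sqrt{-1}\in K$, and that can fail. This is harmless, since the containment you actually use, $M_{i_1}K\subset K(C[\mathfrak{a}_{i_1}])$, follows directly from the hypothesis $M_{i_1}\subset F(C[\mathfrak{a}_{i_1}])$. Once the twist is corrected to $c=\sqrt[8]{-g_2}$ and $\psi_T=T+\frac{g_1}{\sqrt[4]{-g_2}}\tau-\tau^2$, the remainder of your argument — the two containments over $K$ and their intersection — is exactly the paper's proof.
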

\begin{proof}

First of all, $K/F$ is a degree $8$ extension because $-g_2$ is a non-square element in $F^*$. We consider the Dinfeld $A$-module over $K$ defined by $\psi_T=T+g_1'\tau-\tau^2$ with $g_1'=\frac{g_1}{\sqrt[4]{-g_2}}$. Then $\psi$ and $\phi$ are isomorphic Drinfeld modules over $K$. Furthermore, we have $\det\circ\bar{\rho}_{\psi,\mathfrak{a}}=\bar{\rho}_{C,\mathfrak{a}}:G_K\rightarrow (A/\mathfrak{a})^*$ from Proposition 4.7.1 of \cite{Hei03} again. Hence we can draw the following lattice of field extensions:

\begin{center}
\begin{tikzpicture}[node distance=1.9cm][segment length=1pt]
\node(X0) {};
\node(X1)[below of=X0]{};
\node(KL) [right of=X0]{$K(\phi[T+i_1])$};
\node(X2)[below of=KL]{};
\node(L) [below of=X1]{$F(\phi[T+i_1])$};

\node(KM)[below of=X2]{$M_{i_1}K$};

\node(KC)[right of=X2]{$K(C[\mathfrak{a_{i_1}}])$};
\node(KL')[above of=KC]{$K(\psi[\mathfrak{a}_{i_1}])$};

\node(M) [below of=L]{$M_{i_1}$};
\node(K) [below of=KM]{$K$};
\node(F) [below of=M]{$F$};

\draw (L)--(M);
\draw (M)--(F);
\draw (F)--(K);
\draw (K)--(KM);
\draw (KM)--(KL);
\draw(KM)--(KC);
\draw(KC)--(KL');
\end{tikzpicture}
\end{center}
As $\phi$ and $\psi$ are isomorphic Drinfeld modules over $K$, we have $K(\psi[\mathfrak{a}_{i_1}])=K(\phi[\mathfrak{a}_{i_1}])$. Hence we have proved $K(\phi[T+i_1])\cap K(\phi[\mathfrak{a}_{i_1}])\supset M_{i_1}K$.
\end{proof}

Now we have $K(\phi[T+i_1])\cap K(\phi[\mathfrak{a}_{i_1}])\supset M_{i_1}K$. If $M_{i_1}$ is not contained in $K$, then we get $$K(\phi[T+i_1])\cap K(\phi[\mathfrak{a}_{i_1}])\supset M_{i_1}K\supsetneq K.$$

Hence we are able to prove the following lemma: 
\begin{lem}\label{5.3.1}
$F(\phi[T+i_1])\cap F(\phi[\mathfrak{a}_{i_1}])\supset M_{i_1}\supsetneq F$
\end{lem}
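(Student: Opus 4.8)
The plan is to descend the entanglement relation from the field $K = F(\sqrt[8]{-g_2})$ down to $F$, exactly mirroring the descent carried out in Lemma~\ref{4.2} for the case $q=2$. We already know from Lemma~\ref{lem5.2} that $K(\phi[T+i_1]) \cap K(\phi[\mathfrak{a}_{i_1}]) \supset M_{i_1}K$, and from the Claim that $\mathfrak{a}_{i_1} \neq (T+i_1)$. The first step is to dispose of the degenerate possibility: if $M_{i_1} \subset K$ then the inclusion $M_{i_1}K \supsetneq K$ fails, so I must argue separately that this cannot happen. Here I would use that $M_{i_1} = F(\sqrt{(T+i_1)/g_2})$ (established in the proof of the Claim), a quadratic extension ramified at the place $(T+i_1)$, whereas $K/F$ is (up to the constant subextension) ramified only at the places dividing the numerator and denominator of $-g_2$ together with possibly $\infty$; one checks $(T+i_1)$ is unramified in $K/F$, or alternatively that $M_{i_1}$ is not among the quadratic subextensions of the Kummer extension $F(\sqrt[8]{-g_2})/F$, so $M_{i_1} \not\subset K$.

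Granting $M_{i_1} \not\subset K$, so that $M_{i_1}K \supsetneq K$ is a genuine quadratic extension of $K$, the second step is the field-theoretic descent. It suffices to prove $F(\phi[\mathfrak{a}_{i_1}]) \supset M_{i_1}$, since $M_{i_1} \subset F(\phi[T+i_1])$ is automatic. I would reproduce the lattice argument of Lemma~\ref{4.2}: note $[K(\phi[\mathfrak{a}_{i_1}]) : F(\phi[\mathfrak{a}_{i_1}])]$ divides $[K:F]=8$; if this index is $1$ then $F(\phi[\mathfrak{a}_{i_1}]) = K(\phi[\mathfrak{a}_{i_1}]) \supset M_{i_1}K \supset M_{i_1}$ and we are done; otherwise apply the theorem on natural irrationalities to the compositum $M_{i_1}K \cdot F(\phi[\mathfrak{a}_{i_1}]) = K(\phi[\mathfrak{a}_{i_1}])$ to get
$$[M_{i_1}K : M_{i_1}K \cap F(\phi[\mathfrak{a}_{i_1}])] = [K(\phi[\mathfrak{a}_{i_1}]) : F(\phi[\mathfrak{a}_{i_1}])].$$
Since $[M_{i_1}K : F] = 16$, this determines $[M_{i_1}K \cap F(\phi[\mathfrak{a}_{i_1}]) : F]$, and I want to conclude that this intersection contains $M_{i_1}$. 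The cleanest route is to observe that $\Gal(M_{i_1}K/F)$ is abelian (it is $(\mathbb{Z}/2) \times (\mathbb{Z}/8)$ or a quotient thereof, being a subextension of a Kummer extension times a quadratic extension), so its subgroups — equivalently the subfields of $M_{i_1}K$ containing $F$ — form a well-understood lattice; the unique index-$2$ subfield that is ramified at $(T+i_1)$ is $M_{i_1}$, forcing $M_{i_1}K \cap F(\phi[\mathfrak{a}_{i_1}]) \supset M_{i_1}$ as soon as the intersection has even degree over $F$, which the irrationalities computation supplies.

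The main obstacle I anticipate is not the group theory but pinning down the ramification of $K/F$ and of $M_{i_1}K/F$ precisely enough to (a) rule out $M_{i_1} \subset K$ and (b) identify $M_{i_1}$ as the distinguished quadratic subfield of $M_{i_1}K$ at the end. The subtlety is that $-g_2 \in F^*$ is an arbitrary non-square, so its divisor can be complicated, and one must make sure $(T+i_1)$ does not accidentally appear in it with the wrong parity; if it does, the argument that $M_{i_1} \not\subset K$ needs the extra input that $M_{i_1} = F(\sqrt{(T+i_1)/g_2})$ rather than $F(\sqrt{-(T+i_1)})$ — which is precisely where the non-square hypothesis on $-g_2$ was already used in the Claim. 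So the same hypothesis that powers the Claim should be reused here, and I would make that dependence explicit. Once the ramification bookkeeping is done, the descent is formal, and combining it with Lemma~\ref{5.3.1}'s analogue for $i_2$ yields entanglement between the mod $(T+i_1)$ (resp. mod $(T+i_2)$) and mod $\mathfrak{a}_{i_1}$ (resp. mod $\mathfrak{a}_{i_2}$) representations, hence non-surjectivity of $\rho_\phi$ in all remaining cases, completing the proof of Theorem~\ref{5.3}.
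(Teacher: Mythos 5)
Your skeleton (reduce to $F(\phi[\mathfrak{a}_{i_1}])\supset M_{i_1}$, use natural irrationalities to get $[M_{i_1}K:M_{i_1}K\cap F(\phi[\mathfrak{a}_{i_1}])]=[K(\phi[\mathfrak{a}_{i_1}]):F(\phi[\mathfrak{a}_{i_1}])]$, then force $M_{i_1}$ into the intersection) is the paper's, and the irrationalities step is legitimate because $F(\phi[\mathfrak{a}_{i_1}])/F$ is Galois. But the step that actually closes the argument is wrong. You assert that $\Gal(M_{i_1}K/F)$ is abelian of type $(\mathbb{Z}/2)\times(\mathbb{Z}/8)$ because $K=F(\sqrt[8]{-g_2})$ is a Kummer extension; it is not: $\zeta_8$ generates $\F_9$ over $\F_3$, so $F=\F_3(T)$ lacks the needed roots of unity and $K/F$ is in general not even Galois (e.g.\ $-g_2=T$ gives $K=\F_3(T^{1/8})$, whose constant field is $\F_3$), so there is no abelian subfield lattice to appeal to. More importantly, the inference you draw --- ``the intersection has even degree over $F$, hence contains $M_{i_1}$, since $M_{i_1}$ is the unique quadratic subfield ramified at $(T+i_1)$'' --- is false. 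With $M_{i_1}=F\bigl(\sqrt{(T+i_1)/g_2}\bigr)$, the field $M_{i_1}K$ also contains the quadratic subfields $F(\sqrt{-g_2})$ and $F\bigl(\sqrt{-(T+i_1)}\bigr)$ (the latter because $\sqrt{(T+i_1)/g_2}\cdot\sqrt{-g_2}=\sqrt{-(T+i_1)}$); the field $F(\sqrt{-(T+i_1)})$ is ramified at $(T+i_1)$ and is distinct from $M_{i_1}$ precisely because $-g_2$ is a non-square, and if $v_{(T+i_1)}(g_2)$ is odd then $M_{i_1}$ itself is unramified at $(T+i_1)$. So parity of the degree of $M_{i_1}K\cap F(\phi[\mathfrak{a}_{i_1}])$ does not by itself put $M_{i_1}$ inside it; that is exactly the nontrivial point of the lemma.

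The paper closes this by a case analysis on $d=[K(\phi[\mathfrak{a}_{i_1}]):F(\phi[\mathfrak{a}_{i_1}])]\in\{1,2,4,8\}$: assuming $M_{i_1}\not\subset M_{i_1}K\cap F(\phi[\mathfrak{a}_{i_1}])$, it identifies the intersection with a member of the tower $F\subset F(\sqrt{-g_2})\subset F(\sqrt[4]{-g_2})\subset K$, and then notes that having that radical already inside $F(\phi[\mathfrak{a}_{i_1}])$ forces $d$ to be smaller than assumed, a contradiction. Some argument of this concrete kind, tied to the specific subfields of $K$ and the value of $d$, is what your proposal is missing. A secondary divergence: you try to rule out $M_{i_1}\subset K$ outright, whereas the paper takes $M_{i_1}\not\subset K$ as the case hypothesis for this lemma and treats $M_{i_1}\subset K$ (where $M_{i_1}=F(\sqrt{-g_2})$) separately afterwards. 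Your exclusion can indeed be made to work, since $F\bigl(\sqrt{(T+i_1)/g_2}\bigr)=F(\sqrt{-g_2})$ would force $-(T+i_1)$ to be a square in $F$, which it is not; but as you yourself half-concede, the justification must be this Kummer-type comparison, not ramification of $K/F$ at $(T+i_1)$ (the divisor of $g_2$ may involve $(T+i_1)$). In any case this organizational point does not repair the main gap above.
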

\begin{proof}
Similar to lemma \ref{4.2}, it's enough to prove $F(\phi[\mathfrak{a_{i_1}}])\supset M_{i_1}$. We begin with the following:

\begin{tikzpicture}[node distance=1cm]
\node(L) {$K(\phi[\mathfrak{a}_{i_1}])=KF(\phi[\mathfrak{a}_{i_1}])$};
\node(X) [below of=L]{};
\node(MK) [left of=X]{$M_{i_1}K$};
\node(FA) [right of=X]{$F(\phi[\mathfrak{a}_{i_1}])$};
\node(X1) [below of=MK]{};
\node(K) [left of=X1] {$K$};
\node(MKF) [right of=X1]{$M_{i_1}K\cap F(\phi[\mathfrak{a}_{i_1}])$};
\node(F)[below of=X1]{$F$};

\draw (L)--(MK);
\draw (L)--(FA);
\draw (MK)--(MKF);
\draw(FA)--(MKF);
\draw (MK)--(K);
\draw (MKF)--(F);
\draw(K)--(F);
\end{tikzpicture}

The index $[K(\phi[\mathfrak{a}_{i_1}]):F(\phi[\mathfrak{a}_{i_1}])]$ is either $1, 2, 4, \text{or}\ 8 $. If the index is equal to $1$, then
$$F(\phi[\mathfrak{a}_{i_1}])=K(\phi[\mathfrak{a}_{i_1}])\supset M_{i_1}K\supset M_{i_1}.$$

If $[K(\phi[\mathfrak{a}_{i_1}]):F(\phi[\mathfrak{a}_{i_1}])]=2$, then we have from natural irrationalities theorem that
$$[M_{i_1}K:M_{i_1}K\cap F(\phi[\mathfrak{a}_{i_1}])]=[K(\phi[\mathfrak{a}_{i_1}]):F(\phi[\mathfrak{a}_{i_1}])]=2.$$
Since $M_{i_1}$ and $K$ are linearly disjoint and $[M_{i_1}K\cap F(\phi[\mathfrak{a}_{i_1}]):F]=8$, we must have 
$$M_{i_1}\subset M_{i_1}K\cap F(\phi[\mathfrak{a}_{i_1}]).$$ 
Otherwise, $M_{i_1}K\cap F(\phi[\mathfrak{a}_{i_1}])=K$, which implies $  [K(\phi[\mathfrak{a}]):F(\phi[\mathfrak{a}_{i_1}])]=1$, a contradiction.

If $[K(\phi[\mathfrak{a}_{i_1}]):F(\phi[\mathfrak{a}_{i_1}])]=4$, then we have from natural irrationalities theorem that
$$[M_{i_1}K:M_{i_1}K\cap F(\phi[\mathfrak{a}_{i_1}])]=[K(\phi[\mathfrak{a}_{i_1}]):F(\phi[\mathfrak{a}_{i_1}])]=4.$$
Thus we have $[M_{i_1}K\cap F(\phi[\mathfrak{a}_{i_1}]):F]=4$. If $M_{i_1}$ is not contained in $M_{i_1}K\cap F(\phi[\mathfrak{a}_{i_1}])$, then we have $M_{i_1}K\cap F(\phi[\mathfrak{a}_{i_1}])\subset K$ is a degree $4$ extension over $F$. Hence $M_{i_1}K\cap F(\phi[\mathfrak{a}_{i_1}])=F(\sqrt[4]{-g_2})$. This implies the index $[K(\phi[\mathfrak{a}_{i_1}]):F(\phi[\mathfrak{a}_{i_1}])]$ is not equal to $4$, a contradiction. Thus 
$$M_{i_1}\subset M_{i_1}K\cap F(\phi[\mathfrak{a}_{i_1}]).$$

If $[K(\phi[\mathfrak{a}_{i_1}]):F(\phi[\mathfrak{a}_{i_1}])]=8$, then we have from natural irrationalities theorem that
$$[M_{i_1}K:M_{i_1}K\cap F(\phi[\mathfrak{a}_{i_1}])]=[K(\phi[\mathfrak{a}_{i_1}]):F(\phi[\mathfrak{a}_{i_1}])]=8.$$
Thus we have $[M_{i_1}K\cap F(\phi[\mathfrak{a}_{i_1}]):F]=2$. if $M_{i_1}$ is not contained in $M_{i_1}K\cap F(\phi[\mathfrak{a}_{i_1}])$, then we have $M_{i_1}K\cap F(\phi[\mathfrak{a}_{i_1}])\subset K$ is a degree $2$ extension over $F$. Hence $M_{i_1}K\cap F(\phi[\mathfrak{a}_{i_1}])=F(\sqrt{-g_2})$. This implies $\sqrt{-g_2}\in F(\phi[\mathfrak{a}])$ but $\sqrt[4]{-g_2}$ does not belong to $F(\phi[\mathfrak{a}_{i_1}])$. Hence the index $[K(\phi[\mathfrak{a}_{i_1}]):F(\phi[\mathfrak{a}])]$ is not equal to $2$, a contradiction. Thus
$$M_{i_1}\subset M_{i_1}K\cap F(\phi[\mathfrak{a}_{i_1}]).$$

Therefore, we can deduce to $M_{i_1}\subset F(\phi[\mathfrak{a}_{i_1}])$ in every case. This completes the proof.

\end{proof}
On the other hand, if $M_{i_1}$ is contained in $K=F(\sqrt[8]{-g_2})$, then $M_{i_1}$ is equal to the only degree $2$ subextension $F(\sqrt{-g_2})$ of $K$ over $F$. Now we consider the field extension $F(\phi[T+i_2])$ over $F$. From Lemma \ref{lem5.2}, we also have
$$K(\phi[T+i_2])\cap K(\phi[\mathfrak{a}_{i_2}])\supset M_{i_2}K.$$
If $M_{i_2}$ is contained in $K$ as well, then we get $M_{i_2}=F(\sqrt{-g_2})=M_{i_1}$. Thus we can deduce $$F(\phi[T+{i_1}])\cap F(\phi[T+i_2])\supsetneq F.$$
If $M_{i_2}$ is not contained in $K$, then we have, from the same argument as in Lemma \ref{5.3.1},
$$F(\phi[T+i_2])\cap F(\phi[\mathfrak{a}_{i_2}])\supsetneq F.$$
As a result, we have proved the following theorem:
\begin{thm}\label{5.3}
Let $\phi$ be a rank $2$ Drinfeld $A$-module over $F=\F_3(T)$ defined by 
$$\phi_T=T+g_1\tau+g_2\tau^2,\ \text{where $g_1\in F$ and $g_2\in F^*$ with $-g_2$ is a non-square element}.$$
Then the adelic Galois representation 
$${\rho}_{\phi}:G_F\longrightarrow \varprojlim_{\mathfrak{a}}{\rm{Aut}}(\phi[\mathfrak{a}])\cong {\rm{GL_2}}(\widehat{A})$$
is not surjective.
\end{thm}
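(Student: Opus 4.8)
The plan is to prove the contrapositive-flavoured statement that $\rho_\phi$ cannot be surjective by exhibiting, in every case, a nontrivial entanglement between two of the division fields of $\phi$. \emph{Step 1 (reduction).} If any one of the mod $(T+i)$ representations $\bar{\rho}_{\phi,(T+i)}$ with $i\in\{0,\pm1\}$ fails to be surjective, then $\rho_\phi$ is already non-surjective, so I would assume all three are surjective, hence each $\Gal(F(\phi[T+i])/F)\cong\GL_2(\F_3)$. Composing with $\det$ and the sign map $\GL_2(\F_3)\to\{\pm1\}$ (kernel $\SL_2(\F_3)$) produces for each $i$ a quadratic subextension $M_i$ of $F(\phi[T+i])/F$. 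Since $M_i/F$ is abelian, Theorem~\ref{cft} places $M_i$ inside $E\cdot K_T\cdot L_\infty$; because $L_\infty/F$ is a union of extensions of $3$-power degree and $E/F$ has $F(\sqrt{-1})$ as its only quadratic subextension, each $M_i$ is either $F(\sqrt{-1})$ or lies in some Carlitz extension $F(C[\mathfrak{a}_i])$.

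\emph{Step 2 (pigeonhole and the role of the hypothesis).} Among the three indices, if at least two of the $M_i$ equal $F(\sqrt{-1})$, Lemma~\ref{q=3const} gives $F(\phi[T+i_1])\cap F(\phi[T+i_2])\supsetneq F$ and we are done. Otherwise there are two indices $i_1,i_2\in\{0,\pm1\}$ with $M_{i_j}\subset F(C[\mathfrak{a}_{i_j}])$. This is exactly where I would invoke that $-g_2$ is a non-square: the Weil-pairing identity (Proposition 4.7.1 of \cite{Hei03}) gives $\det\circ\bar{\rho}_{\phi,(T+i_j)}=\bar{\rho}_{\psi,(T+i_j)}$ for the rank-$1$ module $\psi_T=T-g_2\tau$, so $M_{i_j}=F(\sqrt{(T+i_j)/g_2})$, whereas $F(C[T+i_j])=F(\sqrt{-(T+i_j)})$; these quadratic extensions coincide precisely when $-g_2\in F^{*2}$, so the Claim above yields $\mathfrak{a}_{i_j}\neq(T+i_j)$. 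Thus an entanglement between $F(\phi[T+i_j])$ and $F(\phi[\mathfrak{a}_{i_j}])$ is a genuine entanglement between division fields of distinct moduli.

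\emph{Step 3 (extracting and descending the entanglement).} Following Lemma~\ref{lem5.2}, I would pass to $K=F(\sqrt[8]{-g_2})$, over which $\phi$ becomes isomorphic to $\psi_T=T+g_1'\tau-\tau^2$ with $g_1'=g_1/\sqrt[4]{-g_2}$ and $\det\circ\bar{\rho}_{\psi,\mathfrak{a}}$ is the Carlitz character, giving $K(\phi[T+i_1])\cap K(\phi[\mathfrak{a}_{i_1}])\supset M_{i_1}K$. Then split: (a) if $M_{i_1}\not\subset K$, then $M_{i_1}K\supsetneq K$ and the descent of Lemma~\ref{5.3.1} (natural irrationalities together with linear disjointness of $M_{i_1}$ and $K$) brings this down to $F(\phi[T+i_1])\cap F(\phi[\mathfrak{a}_{i_1}])\supset M_{i_1}\supsetneq F$; (b) if $M_{i_1}\subset K$, then $M_{i_1}$ is the unique quadratic subextension $F(\sqrt{-g_2})$ of $K/F$, and I would run the same analysis for $i_2$: if also $M_{i_2}\subset K$, then $M_{i_2}=F(\sqrt{-g_2})=M_{i_1}$ and $F(\phi[T+i_1])\cap F(\phi[T+i_2])\supsetneq F$; if $M_{i_2}\not\subset K$, case (a) applied to $i_2$ finishes. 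In all branches two division fields intersect properly, so $\rho_\phi(G_F)$ is a proper subgroup of $\GL_2(\widehat A)$.

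I expect the main obstacle to be case (b): when $M_{i_1}$ is absorbed into the auxiliary base change $K=F(\sqrt[8]{-g_2})$, no entanglement with $F(\phi[\mathfrak{a}_{i_1}])$ is visible over $F$, and one must genuinely use the second index $i_2$ together with the rigidity that $K/F$ has exactly one quadratic subfield. A secondary technical point is the four-way split on $[K(\phi[\mathfrak{a}_{i_1}]):F(\phi[\mathfrak{a}_{i_1}])]\in\{1,2,4,8\}$ inside the descent, where each value other than the wanted conclusion has to be excluded by a degree count of $M_{i_1}K\cap F(\phi[\mathfrak{a}_{i_1}])$ against the tower $F\subset F(\sqrt{-g_2})\subset F(\sqrt[4]{-g_2})\subset K$.
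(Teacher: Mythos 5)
Your proposal is correct and follows essentially the same route as the paper's own argument: the same reduction to surjective mod $(T+i)$ representations producing quadratic fields $M_i$, the same pigeonhole between the constant extension $F(\sqrt{-1})$ and Carlitz extensions, the same use of the non-square hypothesis via the Weil pairing to rule out $\mathfrak{a}_{i_j}=(T+i_j)$, and the same base change to $K=F(\sqrt[8]{-g_2})$ with descent by natural irrationalities (including the split on $[K(\phi[\mathfrak{a}_{i_1}]):F(\phi[\mathfrak{a}_{i_1}])]\in\{1,2,4,8\}$) and the two-index argument when $M_{i_1}\subset K$. Nothing further is needed.
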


\subsection{Assume $-g_2\in F^*$ a square element}
\ \\
In this subsection, we show that it's possible to attain adelic surjectivity for Drinfeld $A$-modules of this type. More specifically, we prove the Drinfeld module defined by $\varphi_T=T+\tau-T^2\tau^2$ can achieve adelic surjectivity.

From Lemma 6.3 of \cite{Zy11}, it will be enough to prove that the $\mathfrak{l}$-adic Galois representations for the Drinfeld module $\varphi$ is surjective for all prime ideals $\mathfrak{l}$ of $A$. For prime ideals $\mathfrak{l}$ with $\deg_T(\mathfrak{l})\geqslant 2$, the proof of $\mathfrak{l}$-adic surjectivity can be proved using the same procedure as in Zywina's work (\cite{Zy11} section 4-6) because $|\F_\mathfrak{l}|=|A/\mathfrak{l}|\geqslant 9$. Therefore, it will be enough to prove the $(T+i)$-adic Galois representations for $\varphi$ are surjective for $i\in\{0,\pm1\}$. 

We can prove the mod $(T+i)$ Galois representations $\bar{\rho}_{\varphi,(T+i)}$ for $\varphi$ is surjective by computing the order of Galois group $\Gal(F(\varphi[T+i])/F)$ directly. As $F(\varphi[T+i])/F$ is the splitting field of the polynomial $(T+i)x+x^3-T^2x^9$ over $F$, we use MAGMA to compute the order of $\Gal(F(\varphi[T+i])/F)$. It turns out that $|\Gal(F(\varphi[T+i])/F)|=48=|\GL_2(\F_3)|$. Thus $\bar{\rho}_{\varphi,(T+i)}$ is surjective for every $i\in\{0,\pm1\}$.

Now we want to prove that $(T+i)$-adic Galois representations $\rho_{\varphi,(T+i)}:G_F\rightarrow \GL_2(A_{(T+i)})$ is surjective for all $i\in\{0,\pm1\}$. Let $H_i$ be the image $\rho_{\varphi,(T+i)}(G_F)$. We know $H$ is a closed subgroup of $\GL_2(A_{(T+i)})$, and its determinant $\det(H)=\det\circ\rho_{\varphi,(T+i)}(G_F)=\det\circ\rho_{C,(T+i)}(G_F)=A_{(T+i)}^*$ because $-g_2\in F^*$ is a square element . Moreover, $H \mod (T+i)^2$ contains a non-scalar matrix that is congruent to the identity matrix modulo $(T+i)$ by Proposition 4.4 of \cite{Zy11}. Hence it will be enough if we can prove that Proposition 4.1 of \cite{PR09} is true for the case $n=2$ and $q=3$.

We first prove that Proposition 2.1 of \cite{PR09} is true for $n=2$ and $k=\F_3$.

\begin{lem}\label{5.5}
Let $H$ be an additive subgroup of the matrix ring $M_2(\F_3)$. Assume that $H$ is invariant under conjugation by $\GL_2(\F_3)$. Then either $H$ is contained in the group of scalar matrices or $H$ contains the group of matrices of trace $0$.
\end{lem}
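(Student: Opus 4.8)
The plan is to exploit the $\GL_2(\F_3)$-conjugation invariance of $H$ to decompose $M_2(\F_3)$ into "pieces" that $H$ must respect, and then argue that as soon as $H$ contains a single non-scalar matrix, it must sweep out all of $\mathfrak{sl}_2(\F_3)$, the trace-zero matrices. Since $\operatorname{char}\F_3 = 3$ and $\F_3$ has only $3$ elements, the representation theory is small enough to handle by hand: the conjugation action of $\GL_2(\F_3)$ on $M_2(\F_3)$ fixes the scalars $Z = \F_3 I$ pointwise and leaves invariant the subspace $\mathfrak{sl}_2$ of trace-zero matrices. Note $I \in \mathfrak{sl}_2$ here, since $\operatorname{tr}(I) = 3 = 0$; so $Z \subset \mathfrak{sl}_2$, and $M_2(\F_3) = \mathfrak{sl}_2 \oplus W$ for any $\GL_2$-stable complement $W$ — but since $3 \mid 4 = \dim M_2$, there is in general no such complement, and this degeneracy is exactly the subtlety the lemma is built to handle.

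First I would reduce to the case that $H$ is not contained in $Z$ and derive that $H$ must then contain some non-scalar $X$. Decomposing $X = \tfrac{1}{2}(\operatorname{tr} X)\cdot$ — no; rather, write $X = X_0 + cI$ is not available over $\F_3$ when working modulo scalars is delicate, so instead I would argue directly: if $X \notin Z$, consider the set $\{gXg^{-1} - X : g \in \GL_2(\F_3)\} \subset H$. All these elements have trace $0$ (trace is conjugation-invariant), so they lie in $\mathfrak{sl}_2$, and since $X \notin Z$ at least one of them is nonzero. Thus $H \cap \mathfrak{sl}_2$ contains a nonzero element $Y$. Now I would show $H \cap \mathfrak{sl}_2 \supseteq \mathfrak{sl}_2$: the key computation is that the $\GL_2(\F_3)$-orbit of any single nonzero $Y \in \mathfrak{sl}_2$, together with $\F_3$-linear combinations, spans $\mathfrak{sl}_2$. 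Concretely I would take $Y$ and conjugate it by elementary matrices $\begin{pmatrix} 1 & t \\ 0 & 1\end{pmatrix}$ and $\begin{pmatrix} 1 & 0 \\ t & 1 \end{pmatrix}$ and the permutation/diagonal matrices to produce enough matrices $E_{12}, E_{21}, E_{11}-E_{22}$ (scaled) inside $H$; the only case needing care is when $Y$ itself is a scalar, i.e. $Y \in Z \setminus \{0\}$, in which case I need to check whether the orbit escapes $Z$ — it does not, and this is precisely the exceptional "$H \subseteq Z$" alternative in the statement, so it must be excluded by the hypothesis that $H \not\subseteq Z$ combined with re-running the first step to get a genuinely non-scalar $Y \in \mathfrak{sl}_2$.

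The main obstacle — and the place I would spend the most care — is the characteristic-$3$ degeneracy: because $I$ has trace zero, $\mathfrak{sl}_2(\F_3)$ is not an irreducible $\GL_2(\F_3)$-module (it has the submodule $Z$, and the quotient $\mathfrak{sl}_2/Z$ is irreducible of dimension $2$), so I cannot simply invoke Schur's lemma. Instead I would argue by hand: given $H \cap \mathfrak{sl}_2 \neq Z$ and $\neq 0$, pick $Y \in (H\cap\mathfrak{sl}_2)\setminus Z$, show by explicit conjugations that $H \cap \mathfrak{sl}_2$ contains a nilpotent (rank-one) matrix — e.g. by taking $Y - gYg^{-1}$ for a suitable $g$ to kill the "diagonal part" — and then show the $\GL_2(\F_3)$-orbit of a nonzero nilpotent matrix spans all of $\mathfrak{sl}_2$ over $\F_3$ (this is a finite check: the nilpotent cone in $\mathfrak{sl}_2(\F_3)$ consists of the $8$ nonzero matrices $\begin{pmatrix} ab & -a^2 \\ b^2 & -ab\end{pmatrix}$, and one verifies these $\F_3$-span the $3$-dimensional space $\mathfrak{sl}_2$). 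Once $\mathfrak{sl}_2 \subseteq H$ is established, the lemma is proved. I would present the spanning verification compactly rather than enumerating all cases, since it is routine linear algebra over a field with three elements.
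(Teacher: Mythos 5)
Your proposal's overall route can be made to work, but as written it is organized around several false structural claims, and the one place you flag as delicate is exactly where the argument, on your own premises, is not actually carried out. For $2\times 2$ matrices one has $\mathrm{tr}(I)=2\neq 0$ in $\F_3$ (not ``$3=0$''), so the scalar matrices are \emph{not} contained in $\mathfrak{sl}_2(\F_3)$; likewise $3\nmid 4=\dim M_2(\F_3)$, and in fact $M_2(\F_3)=\F_3 I\oplus \mathfrak{sl}_2(\F_3)$ as $\GL_2(\F_3)$-modules, with $\mathfrak{sl}_2(\F_3)$ irreducible (the degeneracy you describe, $Z\subset\mathfrak{sl}_2$, happens when $q$ is even, not for $q=3$). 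Consequently the ``case needing care'' --- that $Y=gXg^{-1}-X$ might be a nonzero scalar --- cannot occur: $Y$ has trace $0$, while a nonzero scalar $cI$ has trace $2c\neq 0$. This is fortunate, because your proposed resolution of that case (``re-running the first step'') is not an argument: on your stated premises you would need to rule out that \emph{all} differences $gXg^{-1}-X$ are scalar, and nothing in the proposal does so. So the writeup has a genuine gap precisely at the point you identified as subtle; with the correct characteristic-$3$ facts the subtlety simply evaporates.

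After these corrections your argument is sound and genuinely different from the paper's: an additive subgroup of $M_2(\F_3)$ is automatically an $\F_3$-subspace; for non-scalar $X\in H$ some $g$ satisfies $gXg^{-1}\neq X$ (the centralizer of a non-scalar matrix is a proper subgroup), giving a nonzero element of $H\cap\mathfrak{sl}_2(\F_3)$; and explicit conjugations then show that any nonzero trace-zero matrix generates all of $\mathfrak{sl}_2(\F_3)$ as a $\GL_2(\F_3)$-stable additive group --- i.e.\ $\mathfrak{sl}_2(\F_3)$ is an irreducible module. (Your intermediate step of manufacturing a nilpotent from $Y$ still needs honest computation --- e.g.\ for antidiagonal $Y$ the difference $Y-gYg^{-1}$ with $g$ unipotent is not yet nilpotent --- but splitting into diagonal and antidiagonal parts via conjugation by $\mathrm{diag}(-1,1)$ finishes it routinely, and your count of the $8$ nonzero nilpotents and their span is correct.) The paper instead conjugates by $\mathrm{diag}(-1,1)$ to decompose $H$ into diagonal and antidiagonal summands, shows the antidiagonal summand is either trivial (forcing $H$ scalar) or all antidiagonal matrices, and then passes from the antidiagonal matrices to all of $\mathfrak{sl}_2(\F_3)$ by duality for the trace pairing. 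Your corrected approach is somewhat more conceptual (it amounts to proving irreducibility of the adjoint module), while the paper's trades the orbit analysis for the pairing argument.
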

\begin{proof}

Let $\left(\begin{array}{cc}a & b \\c & d\end{array}\right) \in H$ be an element. As $H$ is invariant under conjugation by $\GL_2(\F_3)$, we have
$$\left(\begin{array}{cc}-1 & 0 \\0 & 1\end{array}\right)\left(\begin{array}{cc}a & b \\c & d\end{array}\right)\left(\begin{array}{cc}-1 & 0 \\0 & 1\end{array}\right)=\left(\begin{array}{cc}a & -b \\-c & d\end{array}\right)\in H.$$
Thus $\left(\begin{array}{cc}a & 0 \\0 & d\end{array}\right)$ and $\left(\begin{array}{cc}0 & b \\c & 0\end{array}\right)$ belong to $H$. We can decompose $H$ as
$$H=H_0\oplus H_1,$$
where $H_0$ is the subgroup of $H$ containing diagonal matrices, and $H_1$ is the subgroup of $H$ containing anti-diagonal matrices.

\begin{claim}
$H_1$ is either trivial or equal to the group of all anti-diagonal matrices $$W_1=\left\{\left(\begin{array}{cc}0 & b \\c & 0\end{array}\right) \mid\ b \text{ and } c\in\F_3\right\}.$$
\end{claim}

\begin{proof}[Proof of claim]
Suppose $H_1$ is nontrivial, then there is some element $\left(\begin{array}{cc}0 & b \\c & 0\end{array}\right)\in H_1$ with $b, c\in\F_3$ not both equal to zero. If one of $b$ or $c$ is equal to $0$, then we consider the element 
$$\left(\begin{array}{cc}0 & 1 \\1 & 0\end{array}\right)\left(\begin{array}{cc}0 & b \\c & 0\end{array}\right)\left(\begin{array}{cc}0 & 1 \\1 & 0\end{array}\right)=\left(\begin{array}{cc}0 & c \\b & 0\end{array}\right)\in H_1.$$
These two element will then generate the whole group of anti-diagonal matrices $W_1$.
Thus we have $H_1=W_1$ in the case where one of $b$ or $c$ is equal to $0$.

If both $b$ and $c$ are not equal to $0$, then we are into the following two cases:

\begin{enumerate}
\item[Case 1: $b=c$]
\ \\
We consider the element
$$\left(\begin{array}{cc}0 & 1 \\1 & 1\end{array}\right)\left(\begin{array}{cc}0 & b \\c & 0\end{array}\right)\left(\begin{array}{cc}-1 & 1 \\1 & 0\end{array}\right)=\left(\begin{array}{cc}-c & c \\b-c & c\end{array}\right)\in H.$$
This element implies $\left(\begin{array}{cc}0 & c \\b-c & 0\end{array}\right)\in H_1$. As $b=c$, we have
$$\left(\begin{array}{cc}0 & c \\0 & 0\end{array}\right)\in H_1.$$ 
Thus we are back into the case where one of $b$ or $c$ is equal to $0$. Therefore, we can deduce that $H_1=W_1$

\item[Case 2: $b=-c$]
\ \\
Similar to case 1, we have $\left(\begin{array}{cc}0 & c \\b-c & 0\end{array}\right)\in H_1$. As we know $\left(\begin{array}{cc}0 & b \\c & 0\end{array}\right)\in H_1$ as well, we have 
$\left(\begin{array}{cc}0 & b+c \\b & 0\end{array}\right)\in H_1.$ Since $b=-c$, we have the element 
$$\left(\begin{array}{cc}0 & 0 \\b & 0\end{array}\right)\in H_1.$$
We are back to the case where one of $b$ or $c$ is zero again. Hence we have proved $H_1=W_1$.

\end{enumerate}
\end{proof}

Suppose now that $H_1$ is trivial, then $H$ is contained in the group of diagonal matrices. Let $\left(\begin{array}{cc}a & 0 \\0 & d\end{array}\right)\in H$, we consider the element

$$\left(\begin{array}{cc}1 & 1 \\0 & 1\end{array}\right)\left(\begin{array}{cc}a & 0 \\0 & d\end{array}\right)\left(\begin{array}{cc}1 & -1 \\0 & 1\end{array}\right)=\left(\begin{array}{cc}a & d-a \\0 & d\end{array}\right)\in H.$$

Since $H$ is contained in the group of diagonal matrices, we must have $d=a$. Therefore, $H$ is contained in the group of scalar matrices.

On the other hand, suppose $H$ contains the group $W_1$ of all anti-diagonal matrices. Then we consider the trace form $M_2(\F_3)\times M_2(\F_3)\rightarrow \F_3$ defined by $(A,B)\mapsto {\rm{tr}}(AB)$. This is a perfect pairing invariant under $\GL_2(\F_3)$. The orthogonal complement $H^{\perp}$ of $H$ is also invariant under conjugation action of $\GL_2(\F_3)$. Since $H$ contains $\left(\begin{array}{cc}0 & 1 \\0 & 0\end{array}\right)$ and $\left(\begin{array}{cc}0 & 0 \\1 & 0\end{array}\right)$, this forces $H^{\perp}$ to be contained in the group of scalar matrices. By taking orthogonal complement again, we can prove $H$ contains the group of matrices of trace $0$.

\end{proof}

Now we are able to prove Proposition 4.1 of \cite{PR09} is true for $n=2$ and $q=3$. Before stating the proposition, we introduce some notation.

Let $\mathfrak{p}=(T+i)$ be an ideal of $A$ where $i\in\{0, \pm1\}$. Let $\pi$ be a uniformizer of $A$ at $\mathfrak{p}$. The congruence filtration of $\GL_2(A_\mathfrak{p})$ is defined by
$$\begin{array}{cl}
G_\mathfrak{p}^0&:=\GL_2(A_\mathfrak{p})\\
G_\mathfrak{p}^i&:=1+\pi^iM_2(A_\mathfrak{p})\ \text{for all $i\geqslant 1$}

\end{array}.
$$
Its successive subquotients yield the following isomorphisms:

$$
\begin{array}{cl}
v_0:G_\mathfrak{p}^{[0]}:=&G_\mathfrak{p}^0/G_\mathfrak{p}^1\xrightarrow{\sim} \GL_2(\F_3)\\

v_i:G_\mathfrak{p}^{[i]}:=&G_\mathfrak{p}^i/G_\mathfrak{p}^{i+1}\xrightarrow{\sim} M_2(\F_3)\ \text{for all $i\geqslant 1$}\\
&[1+\pi^iy]\ \mapsto\  [y]

\end{array}.
$$
For any subgroup $H$ of $\GL_2(A_\mathfrak{p})$, we set $H^i:=H\cap G_\mathfrak{p}^i$ and $H^{[i]}:=H^i/H^{i+1}$.

\begin{prop}\label{5.6}
Let $H$ be a closed subgroup of $\GL_2(A_{\mathfrak{p}})$. Suppose that $\det(H)=A_{\mathfrak{p}}^*$, $H^{[0]}=\GL_2(\F_3)$, and $H^{[1]}$ contains a non-scalar matrix. Then we have $H=\GL_2(A_{\mathfrak{p}})$. 

\end{prop}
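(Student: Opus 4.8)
The plan is to run the standard ``filling out the congruence filtration'' argument, which is the classical strategy going back to Serre and used by Pink--R\"utsche: show that the hypotheses force $H^{[i]}$ to be all of $M_2(\F_3)$ (equivalently $H^i = G_\mathfrak{p}^i$) for every $i\geqslant 1$, and then conclude $H = \GL_2(A_\mathfrak{p})$ by a limit/Nakayama-type argument since $\GL_2(A_\mathfrak{p})$ is a profinite group that is topologically generated modulo each $G_\mathfrak{p}^i$ by $H$. The key structural input is that each $H^{[i]}$ for $i\geqslant 1$ is an additive subgroup of $M_2(\F_3)$ which is invariant under conjugation by $H^{[0]} = \GL_2(\F_3)$ (because $H$ normalizes its own congruence subgroups and reduction mod $\mathfrak{p}$ is surjective onto $\GL_2(\F_3)$). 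By Lemma \ref{5.5}, each such $H^{[i]}$ is therefore either contained in the scalars or contains all trace-zero matrices.

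The first step is to handle $i=1$: by hypothesis $H^{[1]}$ contains a non-scalar matrix, so Lemma \ref{5.5} gives that $H^{[1]}$ contains all trace-zero matrices $\mathfrak{sl}_2(\F_3)$. Combined with $\det(H) = A_\mathfrak{p}^*$, which forces $H^{[1]}$ to contain an element of nonzero trace (the determinant condition at level $\mathfrak{p}^2$ produces a matrix $1 + \pi y$ with $\mathrm{tr}(y)\neq 0$, since $\det(1+\pi y)\equiv 1 + \pi\,\mathrm{tr}(y)\bmod \pi^2$ and the determinant hits all of $1 + \pi\F_3$), we get $H^{[1]} = M_2(\F_3)$, i.e. $H^1 = G_\mathfrak{p}^1$. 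The second step is the inductive propagation: once $H^i = G_\mathfrak{p}^i$, one shows $H^{i+1} = G_\mathfrak{p}^{i+1}$. The mechanism is the commutator identity: for $1+\pi^i x \in H$ and $1 + \pi g \in H$ (with $g$ ranging over $\GL_2$-conjugates, which at level $i=1$ we already control), the commutator lies in $G_\mathfrak{p}^{i+1}$ and equals $1 + \pi^{i+1}[x,g] \bmod \pi^{i+2}$, so $H^{[i+1]}$ contains all Lie brackets $[x,g]$; since $[\mathfrak{gl}_2,\mathfrak{gl}_2] = \mathfrak{sl}_2$ and $\mathfrak{sl}_2(\F_3)$ together with a trace form coming from the determinant condition spans $M_2(\F_3)$, we obtain $H^{[i+1]} = M_2(\F_3)$. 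Again Lemma \ref{5.5} guarantees there is no intermediate possibility: $H^{[i+1]}$ is either scalar or everything, and the bracket computation rules out the scalar case.

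The third step is the passage to the limit: having shown $H^i = G_\mathfrak{p}^i$ for all $i\geqslant 1$ and $H^{[0]} = \GL_2(\F_3)$, we have $H \cdot G_\mathfrak{p}^i = \GL_2(A_\mathfrak{p})$ for every $i$ (lift any element mod $\mathfrak{p}$ into $H$ using surjectivity onto $\GL_2(\F_3)$, correct the error, which lies in $G_\mathfrak{p}^1 = H^1$, and iterate), so $H$ is dense; since $H$ is closed, $H = \GL_2(A_\mathfrak{p})$.

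The main obstacle — and the part that genuinely needs the small-field analysis rather than being formal — is the inductive step in characteristic $p=3$ with the residue field as small as $\F_3$, where one must verify that the Lie-bracket subspace generated by $\mathfrak{sl}_2(\F_3)$ under the relevant conjugation/commutator operations is not accidentally smaller than expected, and that the determinant condition really does supply the missing trace-nonzero direction at every level (not just level one). This is exactly where Lemma \ref{5.5} does the heavy lifting: it collapses the a priori large lattice of possible $H^{[i]}$ down to a dichotomy, so one only has to exhibit a single non-scalar element at each level, which the commutator $[x,g]$ with $x\in\mathfrak{sl}_2(\F_3)$ non-scalar and suitable $g$ provides. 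I would also double-check the edge case $p \mid n$ phenomena (here $p=3$, $n=2$, so $p\nmid n$, which is reassuring — the scalar matrices are not contained in $\mathfrak{sl}_2$, so the trace form is nondegenerate and the scalar/trace-zero decomposition of $M_2(\F_3)$ is clean), since that is precisely the kind of thing that fails for, say, $p=2$ and explains why this proposition is stated separately for $q=3$.
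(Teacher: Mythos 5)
Your skeleton (use Lemma \ref{5.5} plus the determinant hypothesis to show $H^{[1]}=M_2(\F_3)$, then propagate up the congruence filtration and finish by closedness) is the same as the paper's, which proves exactly the statement $H^{[1]}=M_2(\F_3)$ and then cites the remainder of the argument of Proposition 4.1 of Pink--R\"utsche. But your execution of the propagation has a genuine gap. First, you misquote the dichotomy: Lemma \ref{5.5} says an invariant additive subgroup is either contained in the scalars or \emph{contains} $\mathfrak{sl}_2(\F_3)$; it does not say ``scalar or everything''. Since $\mathfrak{sl}_2(\F_3)$ has index $3$ in $M_2(\F_3)$, exhibiting one non-scalar element of $H^{[i]}$ (e.g.\ via the commutator pairing, whose outputs are Lie brackets and hence trace zero) only gives $\mathfrak{sl}_2(\F_3)\subseteq H^{[i]}$, never $H^{[i]}=M_2(\F_3)$. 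The trace direction cannot be produced by commutators, nor by $p$-th powers in this equal-characteristic setting (where $(1+\pi^i y)^3=1+\pi^{3i}y^3$), so at \emph{every} level it must be extracted from $\det(H)=A_\mathfrak{p}^*$ --- and that is exactly the step you assert (``the determinant condition really does supply the missing trace-nonzero direction at every level'') and flag for double-checking, but do not prove.

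Second, even at level $1$ your justification is incomplete: an element $h\in H$ with $\det(h)=1+\pi$ need not be congruent to the identity modulo $\pi$, so surjectivity of the determinant does not directly ``produce a matrix $1+\pi y$ in $H$ with $\mathrm{tr}(y)\neq 0$''. The paper closes this gap with the commutative diagram of exact sequences comparing $1\to H^{[1]}\to H/H^2\to \GL_2(\F_3)\to 1$ with $1\to (1+\pi A_\mathfrak{p})/(1+\pi^2A_\mathfrak{p})\to (A_\mathfrak{p}/\mathfrak{p}^2)^*\to \F_3^*\to 1$: the left determinant map has image of order $1$ or $3$, and triviality would force the middle determinant to factor through a group whose abelianization has order $2$, contradicting $\det(H)=A_\mathfrak{p}^*$. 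If you insist on running your own induction at levels $i\geqslant 2$ instead of citing Pink--R\"utsche as the paper does, you need the analogous statement with $\GL_2(A/\mathfrak{p}^i)$ in place of $\GL_2(\F_3)$, i.e.\ that there is no surjective homomorphism $\GL_2(A/\mathfrak{p}^i)\to (A/\mathfrak{p}^{i+1})^*$; this amounts to checking $[\GL_2(R),\GL_2(R)]=\SL_2(R)$ for the local rings $R=A/\mathfrak{p}^i$ with residue field $\F_3$, a small-field verification in the same spirit as Lemma \ref{5.5} that is absent from your sketch. Finally, a smaller point of bookkeeping: $H^{[1]}=M_2(\F_3)$ is not the same as $H^1=G^1_\mathfrak{p}$, so your induction should be phrased on the subquotients $H^{[i]}$ (equivalently the images of $H$ modulo $\mathfrak{p}^n$), with closedness of $H$ invoked only at the very end.
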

\begin{proof}
The only part in the proof of Proposition 4.1 of \cite{PR09} required ``$q\geqslant 4$'' is in the proof of $H^{[1]}=M_r(\F_\mathfrak{p})$. Thus we prove in our case that $H^{[1]}=M_2(\F_3)$, the rest will follow from the same argument as in the proof of \cite{PR09} Proposition 4.1.

Firstly, we consider the conjugation map
$$H^{[0]}\times H^{[1]}\rightarrow H^{[1]},\ \ ([g],[h])\mapsto [ghg^{-1}].$$
Since $H^{[0]}=\GL_2(\F_3)$ and $H^{[1]}\subset M_2(\F_3)$, the conjugation map implies $H^{[1]}$ is invariant under conjugation by $\GL_2(\F_3)$. As $H^{[1]}$ contains a non-scalar matrix, we know that $H^{[1]}$ contains the group $\mathfrak{sl}_2(\F_3)$ of matrices of trace $0$ by Lemma \ref{5.5}.
Now we consider the following commutative diagram with exact rows:
$$
\begin{array}{ccccclc}
0\longrightarrow &H^1/H^2&\rightarrow &H/H^2&\rightarrow&\GL_2(\F_3)&\rightarrow 0\\
                    &\downarrow{\det}&  &\downarrow{\det}&&\downarrow{\det}& \\
0\longrightarrow &(1+\pi A_\mathfrak{p}/(\pi)^2)^*&\rightarrow &(A_\mathfrak{p}/\mathfrak{p}^2)^*&\rightarrow&\F_3^*&\rightarrow 0
\end{array}.
$$
The right vertical map is certainly surjective, and the middle vertical map is surjective by our assumption that $\det(H)=A_\mathfrak{p}^*$. For the left vertical map, it is either trivial or surjective because $|(1+\pi A_\mathfrak{p}/(\pi)^2)^*|=3$. We claim that the left vertical map is also surjective. 

Suppose that the left vertical map is trivial, then any element in $H/H^2$ can be written as 
$$\left(\begin{array}{cc}a & b \\c & d\end{array}\right)\left(\begin{array}{cc}1+\pi\alpha & \pi\beta \\ \pi\gamma & 1+\pi\delta\end{array}\right)\ \text{where $\left(\begin{array}{cc}a & b \\c & d\end{array}\right)\in GL_2(\F_3)$ and $\left(\begin{array}{cc}1+\pi\alpha & \pi\beta \\ \pi\gamma & 1+\pi\delta\end{array}\right)\in H^1/H^2$}.$$
Now we compute the determinant 
$$\det\left(\begin{array}{cc}a & b \\c & d\end{array}\right)\det\left(\begin{array}{cc}1+\pi\alpha & \pi\beta \\ \pi\gamma & 1+\pi\delta\end{array}\right)=\det\left(\begin{array}{cc}a & b \\c & d\end{array}\right)\in \F_3^*.$$
This implies the middle vertical map is not surjective, which contradicts to our assumption. Thus the left vertical map must be surjective.

The surjectivity of the left vertical map then implies the composition $H^{[1]}\hookrightarrow M_2(\F_3)\xrightarrow{{\rm{tr}}}\F_3$ is surjective. Hence we have the following exact sequence of additive groups
$$0\longrightarrow\mathfrak{sl}_2(\F_3)\longrightarrow H^{[1]}\xrightarrow{\ \ {\rm{tr}}\ \ }\F_3\longrightarrow0.$$
This implies $H^{[1]}=M_2(\F_3)$ by the computing order of $H^{[1]}$.
\end{proof}

With Lemma \ref{5.5} and Lemma \ref{5.6} in hand, we can conclude this subsection by the following:

\begin{thm}\label{q=3-2}
Let $A=\F_3[T]$ and $F=\F_3(T)$. Let $\varphi$ be a rank $2$ Drinfeld $A$-module over $F$ defined by 
$$\varphi_T=T+\tau-T^2\tau^2.$$
Then the adelic Galois representation 
$${\rho}_{\varphi}:G_F\longrightarrow \varprojlim_{\mathfrak{a}}{\rm{Aut}}(\varphi[\mathfrak{a}])\cong {\rm{GL_2}}(\widehat{A})$$
is surjective.
\end{thm}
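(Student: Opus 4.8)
The plan is to reduce adelic surjectivity to $\mathfrak{l}$-adic surjectivity for every prime $\mathfrak{l}$ of $A$, and then to dispatch the cases by degree. Citing Lemma 6.3 of \cite{Zy11}, it suffices to show that $\rho_{\varphi,\mathfrak{l}}(G_F)=\GL_2(A_\mathfrak{l})$ for all primes $\mathfrak{l}$. For primes with $\deg_T(\mathfrak{l})\geqslant 2$ one has $|\F_\mathfrak{l}|\geqslant q^2\geqslant 9$, so Zywina's argument (\cite{Zy11}, sections 4--6) applies verbatim, since every place where that argument needed $q\geqslant 5$ it really only needed the residue field to have size at least $5$ (indeed at least $4$ for the Pink--R\"utsche input). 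Thus the whole problem collapses to the three primes $\mathfrak{l}=(T+i)$ with $i\in\{0,\pm1\}$, and here the key structural point is that $-g_2=-T^2=-(T)^2$ is a \emph{square} in $F^*$, which by the Weil-pairing identity $\det\circ\bar{\rho}_{\varphi,\mathfrak{a}}=\bar{\rho}_{C,\mathfrak{a}}$ (\cite{Hei03}, Prop.\ 4.7.1) guarantees $\det\rho_{\varphi,(T+i)}(G_F)=A_{(T+i)}^*$ — this is exactly the hypothesis that failed in the non-square case of Theorem \ref{5.3}.

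Next I would handle the mod $(T+i)$ level. The field $F(\varphi[T+i])$ is the splitting field over $F$ of $(T+i)x+x^3-T^2x^9$; a direct \textsc{Magma} computation gives $|\Gal(F(\varphi[T+i])/F)|=48=|\GL_2(\F_3)|$ for each $i\in\{0,\pm1\}$, so $\bar{\rho}_{\varphi,(T+i)}$ is surjective, i.e.\ $H_i^{[0]}=\GL_2(\F_3)$ where $H_i:=\rho_{\varphi,(T+i)}(G_F)$. Then I would invoke Proposition 4.4 of \cite{Zy11} to see that $H_i\bmod (T+i)^2$ contains a non-scalar matrix congruent to the identity modulo $(T+i)$, i.e.\ $H_i^{[1]}$ contains a non-scalar element. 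With $\det(H_i)=A_{(T+i)}^*$, $H_i^{[0]}=\GL_2(\F_3)$, and $H_i^{[1]}$ containing a non-scalar matrix, Proposition \ref{5.6} (the $n=2$, $q=3$ version of Pink--R\"utsche Prop.\ 4.1, which I have already established using Lemma \ref{5.5}) applies and yields $H_i=\GL_2(A_{(T+i)})$. Combining the three small primes with the large-degree primes gives $\mathfrak{l}$-adic surjectivity for every $\mathfrak{l}$, hence adelic surjectivity by \cite{Zy11}, Lemma 6.3.

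The main obstacle — and the reason this theorem is stated separately from Zywina's — is precisely the prime ideals of degree $1$, where $|\F_\mathfrak{l}|=3$ is too small for Pink--R\"utsche's Proposition 4.1 and for parts of Zywina's descent to apply directly. The substitute for Prop.\ 4.1 is Proposition \ref{5.6}, whose proof reduces to showing $H^{[1]}=M_2(\F_3)$; that in turn rests on the $\GL_2(\F_3)$-conjugation classification of additive subgroups of $M_2(\F_3)$ (Lemma \ref{5.5}) together with a determinant-filtration diagram chase forcing the trace map $H^{[1]}\to\F_3$ to be surjective. So the genuinely new content is already packaged in Lemmas \ref{5.5} and \ref{5.6}; once those are in hand, the proof of Theorem \ref{q=3-2} is the bookkeeping described above, with the \textsc{Magma} computation of the mod $(T+i)$ Galois groups as the only non-conceptual ingredient. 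I would also double-check that the hypotheses of \cite{Zy11} Prop.\ 4.4 and Lemma 6.3 do not secretly require $q\geqslant 5$; if Prop.\ 4.4 does, I would replace it with an explicit verification (again via \textsc{Magma} on $F(\varphi[(T+i)^2])$) that $H_i^{[1]}\neq 0$ contains a non-scalar element.
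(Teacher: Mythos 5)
Your proposal is correct and follows essentially the same route as the paper: reduce to $\mathfrak{l}$-adic surjectivity via Zywina's Lemma 6.3, treat primes of degree $\geqslant 2$ by Zywina's argument since $|\F_\mathfrak{l}|\geqslant 9$, and handle the three degree-one primes by the \textsc{Magma} computation of the mod $(T+i)$ image, the Weil-pairing determinant identity (valid here since $-g_2$ is a square), Zywina's Proposition 4.4, and the $q=3$ substitute for Pink--R\"utsche's Proposition 4.1 (Proposition \ref{5.6}, resting on Lemma \ref{5.5}). Your closing caution about re-checking the hypotheses of \cite{Zy11} Prop.\ 4.4 and Lemma 6.3 is reasonable but not an issue the paper treats differently.
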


\section{The case $q=2^e\geqslant 4$}

In this section, $A=\F_q[T]$ and $F=\F_q(T)$ where $q\geqslant 4$ is some $2$-power. 
We will prove that it's possible to attain adelic surjectivity for Drinfeld $A$-modules over $F$ of rank $2$. More specifically, we show that Zywina's choice of Drinfeld module defined by $\varphi_T=T+\tau-T^{q-1}\tau^2$ can still achieve adelic surjectivity.

In \cite{Zy11}, Zywina proved the adelic Galois representation $\rho_{\varphi}:G_F\rightarrow \GL_2(\hat{A})$ attached to the Drinfeld $\F_q[T]$-module over $\F_q(T)$ defined by
$$\phi_T=T+\tau-T^{q-1}\tau^2$$ 
is surjective under the assumption that $q\geqslant 5$ is an odd prime power. However, he used the condition``$q\geqslant 5$'' only in proving Proposition 5.1 of \cite{Zy11}, and in Lemma 6.1 where he needed to apply Proposition 4.1 of \cite{PR09}. In the proof of Proposition 5.1 of \cite{Zy11}, it's enough to have $|\F_q^*|\geqslant 3$ because we only need to make sure there are at least $3$ elements in $\F_q[T]$ of degree $1$. In the statement of Proposition 4.1 of \cite{PR09}, it only requires $|\F_q|\geqslant 4$. Hence we can still prove Proposition 5.1 and Lemma 6.1 in \cite{Zy11} through the same procedure.
On the other hand, Zywina used the condition ``$q$ is odd'' in Lemma A.3 and in Lemma A.7 of \cite{Zy11}. In the proof of Lemma A.7, he applied Theorem 1 in \cite{Die80} where the theorem is for fields of odd characteristic. But this Theorem 1 is also true for finite field of even characteristic (see argument after Theorem 3 in \cite{Die80}), hence Lemma A.7 is still true in our case.
Therefore, it remains to prove that Lemma A.3 of \cite{Zy11} is true in our case.

\begin{lem}\label{6.1}
For each prime ideal $\mathfrak{p}$ of $A$, the group $\SL_2(A_\mathfrak{p})$ is equal to its commutator subgroup. The only normal subgroup of $\SL_2(A_\mathfrak{p})$ with simple quotient is the group $N=\{B\in\SL_2(A_\mathfrak{p})$,\ where $B\equiv I_2 \mod \mathfrak{p}\}$

\end{lem}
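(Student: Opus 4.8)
The plan is to prove the two assertions in turn, following the pattern of the analogous statement over $\mathbb{Z}_p$ (this lemma is precisely the even‑characteristic analogue of \cite{Zy11}, Lemma A.3), and to locate exactly the two spots where ``$q$ odd'' enters that argument and replace them by arguments valid in characteristic $2$. Throughout, write $G=\SL_2(A_\mathfrak{p})$, let $N$ be the kernel of the reduction map $G\to\SL_2(\F_\mathfrak{p})$ (that is, the congruence subgroup in the statement), and use only that $|\F_\mathfrak{p}|=q^{\deg\mathfrak{p}}\geqslant q\geqslant 4$.

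\emph{Perfectness.} Since $A_\mathfrak{p}$ is a local ring, every element of $G$ is a product of elementary matrices $E_{12}(t)$ and $E_{21}(t)$: for $\left(\begin{smallmatrix}a&b\\c&d\end{smallmatrix}\right)\in G$ at least one of $a,c$ is a unit (otherwise $ad-bc$ would lie in the maximal ideal), so the usual row and column clearing reduces any such matrix to $I_2$. Hence it suffices to write each elementary matrix as a commutator. Choose $u\in A_\mathfrak{p}^{*}$ whose image in $\F_\mathfrak{p}^{*}$ is different from $1$ — possible because $|\F_\mathfrak{p}^{*}|\geqslant 3$ — so that $v:=u^{2}-1$ reduces to a nonzero element of $\F_\mathfrak{p}$ and is therefore again a unit; a direct computation then gives $[\,\mathrm{diag}(u,u^{-1}),\,E_{12}(v^{-1}t)\,]=E_{12}(t)$, and likewise for $E_{21}$. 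Thus $G=[G,G]$, and this commutator subgroup is closed, being all of $G$. This is exactly the step that fails over the residue fields $\F_2$ and $\F_3$, where $\mathrm{diag}(u,u^{-1})$ is forced to be $I_2$; the hypothesis $q\geqslant 4$ excludes these.

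\emph{Normal subgroups.} In characteristic $2$ one has $-I_2=I_2$, so $\SL_2(\F_\mathfrak{p})$ is centreless and coincides with $\mathrm{PSL}_2(\F_\mathfrak{p})$, which is simple because $|\F_\mathfrak{p}|\geqslant 4$; hence $G/N\cong\SL_2(\F_\mathfrak{p})$ is simple and $N$ is a normal subgroup of the asserted kind. For uniqueness, let $H\trianglelefteq G$ be a closed normal subgroup with simple quotient, so that $H$ is maximal among proper closed normal subgroups of $G$. The closed normal subgroup $HN$ contains $H$, hence equals $H$ or $G$. If $HN=G$, then $G/H\cong N/(N\cap H)$ is a nontrivial quotient of $N$; but $N$ is pro‑$2$ (its graded congruence pieces are $\F_\mathfrak{p}$‑vector spaces, hence elementary abelian $2$‑groups in characteristic $2$), so a simple such quotient must be $\mathbb{Z}/2\mathbb{Z}$, giving $[G:H]=2$ and therefore $H\supseteq[G,G]=G$ by the first part — contradicting that $H$ is proper. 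Hence $HN=H$, i.e. $N\subseteq H$, and then $H/N$ is a proper closed normal subgroup of the simple group $\SL_2(\F_\mathfrak{p})$, so $H/N=1$ and $H=N$.

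All the displayed identities are routine matrix computations; the single point that genuinely uses the hypothesis is that the characteristic‑$2$ small‑field exceptions $\F_2,\F_3$ do not occur, which $q\geqslant 4$ guarantees (it forces $|\F_\mathfrak{p}|\geqslant 4$, making $\SL_2(\F_\mathfrak{p})$ both simple and generated by elementary matrices that are commutators, and $\F_\mathfrak{p}^{*}\setminus\{1\}$ nonempty). I expect the step needing the most care to be the perfectness assertion — the reduction to elementary matrices together with the commutator identity — and, to a lesser extent, the observation that $N$ is pro‑$2$, so that any simple quotient of it is cyclic of order $2$; the rest is a transcription of the odd‑characteristic argument.
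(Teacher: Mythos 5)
Your proof is correct, but it takes a genuinely different route from the paper's, chiefly in the perfectness step. The paper runs the congruence filtration $S^i\subset\SL_2(A_\mathfrak{p})$ and shows, for the commutator subgroup $H$, that $H^{[0]}=\SL_2(\F_\mathfrak{p})$ (simplicity of $\SL_2(\F_\mathfrak{p})$ in even characteristic with $|\F_\mathfrak{p}|\geqslant 4$) and that $H^{[i]}=\mathfrak{sl}_2(\F_\mathfrak{p})$ for $i\geqslant 1$, the latter by combining Proposition 2.1 of \cite{PR09} (invariance under $\GL_2(\F_\mathfrak{p})$-conjugation) with the explicit non-scalar commutator $sXs^{-1}-X$ for $s=\mathrm{diag}(\alpha,\alpha^{-1})$; closedness of $H$ then gives $H=\SL_2(A_\mathfrak{p})$. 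You instead generate $\SL_2(A_\mathfrak{p})$ over the local ring by elementary matrices and exhibit each $E_{12}(t)$ as the single commutator $[\mathrm{diag}(u,u^{-1}),E_{12}((u^{2}-1)^{-1}t)]$, where $u^{2}-1$ is a unit as soon as $\bar u\neq 1$ (in characteristic $2$ this is exactly $(\bar u-1)^{2}\neq 0$, so $|\F_\mathfrak{p}^{*}|\geqslant 3$ suffices); this is more elementary and bypasses both the filtration and the Pink--R\"utsche lemma, at the cost of quietly invoking the standard row-reduction fact that $\SL_2$ of a local ring is generated by elementary matrices. For the uniqueness of $N$, the paper appeals to the Jordan--H\"older factors $\SL_2(\F_\mathfrak{p})$ and $\mathbb{Z}/2\mathbb{Z}$ together with the absence of abelian quotients; your version (by maximality $HN=H$ or $G$, and $HN=G$ would make the simple quotient a continuous quotient of the pro-$2$ group $N$, hence $\mathbb{Z}/2\mathbb{Z}$, contradicting perfectness) rests on the same two inputs but spells out the exclusion more carefully than the paper's closing sentence. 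The only caveat, shared with the paper, is that ``normal subgroup with simple quotient'' should be understood in the closed (profinite) sense, as you indeed use it.
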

\begin{proof}
This proof is slightly modified from the proof of Lemma A.3 in \cite{Zy11} so that we can avoid using the condition ``$q$ is odd''.

First of all, we prove that $\SL_2(A_\mathfrak{p})$ is equal to its own commutator subgroup. Let $H$ be the commutator subgroup of $\SL_2(A_\mathfrak{p})$. It is a closed normal subgroup of $\SL_2(A_\mathfrak{p})$ and $\GL_2(A_\mathfrak{p})$. Now we define the followings

$$
\begin{array}{cll}
S^0&:=&\SL_2(A_\mathfrak{p})\\
S^i&:=&\{s\in\SL_2(A_\mathfrak{p}) \text{ with } s\equiv 1\mod \mathfrak{p}^i\}\ \text{for $i\geqslant 1$}
\end{array}
$$
For $i\geqslant 0$, we define $H^i=H\cap S^i$. We also define $S^{[i]}:=S^i/S^{i+1}$ and $H^{[i]}:=H^i/H^{i+1}$. There is a natural inclusion map from $H^{[i]}$ to $S^{[i]}$.

\begin{claim}
$H^{[i]}=S^{[i]}$ for all $i\geqslant 0$.
\end{claim}
\begin{proof}[Proof of claim]
Reduction modulo $\mathfrak{p}$ gives us that $S^{[0]}\cong \SL_2(\F_\mathfrak{p})$. Since $F_\mathfrak{p}$ is a finite field of even characteristic with order $\geqslant 4$, the group $\SL_2(F_\mathfrak{p})\cong {\rm{PSL}}_2(\F_\mathfrak{p})$ is a simple group. As $H^{[0]}$ is the commutator subgroup of $\SL_2(\F_\mathfrak{p})$, $H^{[0]}$ is a nontrivial normal subgroup of $\SL_2(\F_\mathfrak{p})$. Hence $H^{[0]}=S^{[0]}$.

For $i\geqslant 1$, let $\mathfrak{sl}_2(\F_\mathfrak{p})$ be the additive subgroup of $M_2(\F_\mathfrak{p})$ consists of all matrices of trace $0$. We have an isomorphism

$$S^{[i]}\xrightarrow{\sim} \mathfrak{sl}_2(\F_\mathfrak{p}),\ [1+\mathfrak{p}^iy]\mapsto [y].$$
Now we consider $H^{[i]}\hookrightarrow S^{[i]}\cong \mathfrak{sl}_2(\F_\mathfrak{p})$. As $H$ is a normal subgroup of $\GL_2(A_\mathfrak{p})$, $H^{[i]}$ is invariant under conjugation by $\GL_2(\F_\mathfrak{p})$. Thus Proposition 2.1 of \cite{PR09} implies either $H^{[i]}$ is contained in the group of scalar matrices or $H^{[i]}$ contains $\mathfrak{sl}_2(\F_\mathfrak{p})$. Thus it's enough to prove $H^{[i]}$ contains a non-scalar matrix.

Consider the commutator map $S^0\times S^i\rightarrow S^i$ defined by $(g,h)\mapsto ghg^{-1}h^{-1}$. This induces a map $S^{[0]}\times S^{[i]}\rightarrow S^{[i]}$ that takes values in $H^{[i]}$. Since $S^{[0]}\cong \SL_2(\F_\mathfrak{p})$ and $S^{[i]}\cong \mathfrak{sl}_2(\F_\mathfrak{p})$, the induced map becomes
$$\SL_2(\F_\mathfrak{p})\times \mathfrak{sl}_2(\F_\mathfrak{p})\rightarrow \mathfrak{sl}_2(\F_\mathfrak{p}),\ \ (s,X)\mapsto sXs^{-1}-X.$$
As the above map takes values in $H^{[i]}$, we can consider 
$$s=\left(\begin{array}{cc}\alpha & 0 \\0 & \alpha^{-1}\end{array}\right),\ X=\left(\begin{array}{cc}0 & 1 \\1 & 0\end{array}\right),\ \text{where $\alpha$ is a generator of $\F_\mathfrak{p}^*$}. $$
Then we compute
$$sXs^{-1}-X=\left(\begin{array}{cc}0 & \alpha^2-1 \\\alpha^{-2}-1 & 0\end{array}\right).$$
As $\alpha^2$ is not equal to $1$, the obtained matrix is not a scalar matrix. Hence $H^{[i]}$ contains a non-scalar matrix, so $H^{[i]}\supset \mathfrak{sl}_2(\F_\mathfrak{p})$. This proves $H^{[i]}=S^{[i]}$ for all $i\geqslant 1$.
\end{proof}
Since we have proved $H^{[i]}=S^{[i]}$ for all $i$, we can conclude that $H=\SL_2(A_\mathfrak{p})$.

Now let $N'$ be a normal subgroup of $\SL_2(A_\mathfrak{p})$ with simple quotient. As the Jordan-H\"older factors of $\SL_2(A_\mathfrak{p})$ are $\SL_2(\F_\mathfrak{p})$ and $\mathbb{Z}/2\mathbb{Z}$, we have $\SL_2(A_\mathfrak{p})/N'\cong \SL_2(\F_\mathfrak{p})\cong \SL_2(A_\mathfrak{p})/N$ because $\SL_2(A_\mathfrak{p})$ has no abelian quotients. This implies $N'=N$ because otherwise $NN'$ will be a proper normal subgroup of $\SL_2(A_\mathfrak{p})$ that contains $N$ and $N'$, a contradiction.
\end{proof}

Now we are able to prove the adelic surjectivity for the Drinfeld $A$-module $\varphi$ by going through the same procedure as in Zywina's work. We conclude this section by stating the theorem:

\begin{thm}\label{6.2}
Let $A=\F_q[T]$ and $F=\F_q(T)$ with $q=2^e\geqslant 4$. Let $\varphi$ be a rank $2$ Drinfeld $A$-module over $F$ defined by 
$$\varphi_T=T+\tau-T^{q-1}\tau^2.$$
Then the adelic Galois representation 
$${\rho}_{\varphi}:G_F\longrightarrow \varprojlim_{\mathfrak{a}}{\rm{Aut}}(\varphi[\mathfrak{a}])\cong {\rm{GL_2}}(\widehat{A})$$
is surjective.

\end{thm}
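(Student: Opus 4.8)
The plan is to follow Zywina's argument from \cite{Zy11} for the Drinfeld module $\varphi_T = T + \tau - T^{q-1}\tau^2$, checking that every step goes through when $q = 2^e \geqslant 4$ rather than $q \geqslant 5$ odd. By Lemma 6.3 of \cite{Zy11} it suffices to prove that the $\mathfrak{l}$-adic Galois representation $\rho_{\varphi,\mathfrak{l}}$ is surjective for every prime $\mathfrak{l}$ of $A$; this reduces the adelic problem to a family of $\mathfrak{l}$-adic problems. For each $\mathfrak{l}$, the standard strategy is: (i) establish that the mod-$\mathfrak{l}$ representation $\bar\rho_{\varphi,\mathfrak{l}}$ is surjective onto $\GL_2(\F_\mathfrak{l})$; (ii) use the Weil pairing (Proposition 4.7.1 of \cite{Hei03}) together with the surjectivity of the Carlitz cyclotomic character to pin down the determinant, namely $\det \circ \rho_{\varphi,\mathfrak{l}}(G_F) = A_\mathfrak{l}^*$; (iii) produce an element of the image that is $\equiv I_2 \bmod \mathfrak{l}$ but non-scalar mod $\mathfrak{l}^2$ (this is Proposition 4.4 of \cite{Zy11}, which does not need $q$ odd); and (iv) invoke the group-theoretic input — Proposition 4.1 of \cite{PR09} — to conclude that the image is all of $\GL_2(A_\mathfrak{l})$.

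The first task is to isolate exactly where the hypothesis ``$q \geqslant 5$ odd'' is used in \cite{Zy11}, and I would argue as the excerpt already indicates: the condition enters only in Proposition 5.1 (mod-$\mathfrak{l}$ surjectivity), in Lemma 6.1 (the application of \cite{PR09} Proposition 4.1), and in the appendix Lemmas A.3 and A.7. For Proposition 5.1 one only needs at least three degree-one polynomials in $A$, i.e. $|\F_q^*| \geqslant 3$, which holds since $q \geqslant 4$. For Lemma 6.1 and Proposition 4.1 of \cite{PR09}, the hypothesis there is merely $|\F_q| \geqslant 4$, again satisfied. Lemma A.7 rests on Theorem 1 of \cite{Die80}, which is valid over finite fields of even characteristic as well (per the discussion following Theorem 3 of \cite{Die80}). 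This leaves Lemma A.3 of \cite{Zy11} as the only genuine gap, and that gap is filled by Lemma \ref{6.1} above: $\SL_2(A_\mathfrak{p})$ is perfect and its unique normal subgroup with simple quotient is the principal congruence subgroup $N$ at $\mathfrak{p}$. The proof of Lemma \ref{6.1} uses that $\SL_2(\F_\mathfrak{p}) \cong \mathrm{PSL}_2(\F_\mathfrak{p})$ is simple when $\mathrm{char} = 2$ and $|\F_\mathfrak{p}| \geqslant 4$, plus a diagonal-times-off-diagonal commutator computation to produce a non-scalar element of trace zero in each graded piece, so that Proposition 2.1 of \cite{PR09} forces $H^{[i]} = S^{[i]}$.

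With Lemma \ref{6.1} in hand, the remaining deduction is purely a matter of reassembling Zywina's argument: one runs through his Sections 4--6 verbatim, substituting the references above, to obtain $\rho_{\varphi,\mathfrak{l}}(G_F) = \GL_2(A_\mathfrak{l})$ for all primes $\mathfrak{l}$, and then Lemma 6.3 of \cite{Zy11} upgrades this to adelic surjectivity of $\rho_\varphi$. I anticipate the main obstacle to be purely expository rather than mathematical: one must verify carefully that the specific module $\varphi_T = T + \tau - T^{q-1}\tau^2$ satisfies all the hypotheses feeding into Zywina's Proposition 5.1 in even characteristic — in particular that the relevant reductions are of the right Kodaira-type / the mod-$\mathfrak{l}$ image is not contained in a Borel or normalizer of a torus — since those are the places where characteristic-$2$ pathologies (inseparability of certain auxiliary polynomials, coincidence of subgroups that are distinct in odd characteristic) could in principle intrude. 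Once this check is complete, the theorem follows.
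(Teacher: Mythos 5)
Your proposal follows essentially the same route as the paper: reduce to $\mathfrak{l}$-adic surjectivity via Zywina's Lemma 6.3, note that the hypothesis ``$q\geqslant 5$ odd'' enters only through Proposition 5.1, the use of Proposition 4.1 of \cite{PR09}, and Lemmas A.3 and A.7 of \cite{Zy11}, check that all but A.3 survive when $q=2^e\geqslant 4$, and replace A.3 by the perfectness/congruence-subgroup statement for $\SL_2(A_\mathfrak{p})$ proved exactly as in Lemma \ref{6.1}. This matches the paper's argument step for step, so no further comparison is needed.
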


\bibliographystyle{alpha}
\bibliography{Exceptional_cases_on_adelic_surjectivity_for_Drinfeld_modules_of_rank_2_ver1}
\end{document}